\newlength\tindent
\definecolor{delim}{RGB}{20,105,176}
\definecolor{numb}{RGB}{106, 109, 32}
\definecolor{string}{rgb}{0.64,0.08,0.08}
\newcommand{\CI}{\mathrel{\perp\mspace{-10mu}\perp}}
\newtheorem*{theorem*}{Theorem}
\newtheorem{theorem}{Theorem}
\newtheorem{proposition}[theorem]{Proposition}
\newtheorem{lemma}[theorem]{Lemma}
\newtheorem*{corollary*}{Corollary}
\newtheorem{corollary}[theorem]{Corollary}
\theoremstyle{definition}
\newtheorem{remark}[theorem]{Remark}
\newtheorem{example}[theorem]{Example}
\def\P{{\mathbb{P}}}
\def\C{{\mathcal{C}}}
\def\V{\mathcal{V}_X}
\def\F{\mathcal{F}}
\def\Mc{\mathcal{M}_\mathcal{C}}
\def\Vc{\mathcal{V}_{X,\mathcal{C}}}
\def\para{\vspace{1.5mm}}
\def\C{{\mathbb C}}
\def\Z{\mathbb{Z}}
\def\O{\mathcal{O}}
\title{\bf Nash Conditional Independence
Curve}
\author{Irem Portakal and Javier Sendra--Arranz}
\date{}
\providecommand{\keywords}[1]{\textbf{\textit{Keywords---}} #1}
\Crefname{page}{page}{page}
\begin{document}
\maketitle

\begin{abstract}
\noindent We study the Spohn conditional independence (CI) variety $C_X$ of an $n$-player game $X$ for undirected graphical models on $n$ binary random variables consisting of one edge. For a generic game, we show that $C_X$ is a smooth irreducible complete intersection curve (Nash conditional independence curve) in the Segre variety $(\P^{1})^{n-2} \times \P^3$ and we give an explicit formula for its degree and genus. We prove two universality theorems for $C_X$: The product of any affine real algebraic variety with the real line or any affine real algebraic variety in $\mathbb{R}^m$ defined by at most $m-1$ polynomials is isomorphic to an affine open subset of $C_X$ for some game $X$.
\end{abstract}
\keywords{Nash equilibria, dependency equilibria, Spohn variety, conditional independence model, graphical models.}
\para

\section{Introduction}

The classical concept of Nash equilibria focuses on the case where each player acts independently, without communication and collaboration with the other players \cite{nash51}. In this equilibrium, no player can increase their expected payoffs by changing their strategy while assuming the other players have fixed strategies. Alongside Nash equilibria, the well-known concept of correlated equilibria \cite{aumann1} has also been found to be non-optimal for certain games. Spohn argues this phenomenon \cite{spohn2003} on the classical example of the prisoners' dilemma in which both Nash and correlated equilibrium recommend mutual betrayal, leading to harsh sentences for both prisoners. The most beneficial or Pareto optimal strategy is mutual non-betrayal, leading to light sentences for both. In an alternative perspective, Spohn addressed the scenario involving dependencies between players and proposed in \cite{spohn2003} the concept of {\em dependency equilibria}, where the players simultaneously maximize their conditional expected payoffs. In particular, he shows that mutual non-betrayal is a dependency equilibrium.\\ 

The concepts of Nash and dependency equilibria can be modeled in terms of {\em undirected graphical models} \cite[Chapter 6]{BI}. The vertices of the underlying graph of the graphical model represent the players of the game and the dependencies of the choices of the players are depicted with an edge in the graph. Associated to the undirected graphical model, we consider the global Markov properties which is a set of certain conditional independence statements that is consistent with the underlying graph. The probability distributions that satisfy the conditional independence statements can be represented as a system of homogeneous quadratic constraints, which gives rise to the conditional independence model (\cite[Proposition 8.1]{CBMS}, \cite[Proposition 4.1.6]{Sul}). This graphical modeling approach incorporates both the concepts of Nash and dependency equilibria: In the case of Nash equilibria of an $n$-player game, the graphical model consists of $n$ isolated vertices whereas for the dependency equilibria, one obtains a complete graph on $n$ vertices. The totally mixed dependency equilibria can be defined by the $2\times 2$ minors of certain matrices of linear forms i.e.\ the \emph{Spohn variety}, restricted to the open probability simplex. The algebro-geometric features of the Spohn variety are studied in \cite{BI} and the rationale behind our interest in totally mixed equilibria is explained in Section~\ref{sec: 2.1}. The \emph{Spohn conditional independence (CI) variety} is obtained by intersecting the Spohn variety with the conditional independence model. Then, the \emph{set of totally mixed conditional independence (CI) equilibria} is defined as the intersection of the Spohn CI variety with the open probability simplex. Thus, the concept of CI equilibria deals with all possible dependencies between the players of the game and bridges the gap between the totally mixed Nash and dependency equilibria. In particular, the set of totally mixed CI equilibria is contained in the totally mixed dependency equilibria and it contains the set of totally mixed Nash equilibria.\\

The next natural question is to study the case where the graphical model consists of only one-edge, which is a very close case to Nash equilibria except for the fact that dependencies only on 2 players are imposed. In this paper, we focus on the case where each player has binary choices. We investigate the properties of the set of totally mixed CI equilibria from the analysis 
of the algebro-geometric properties of the Spohn CI variety, which generically turns out to be a smooth irreducible curve. We call this curve the \emph{Nash conditional independence (CI) curve}. The study of the Spohn CI variety pertains to \cite[Conjecture 6.3]{BI}, where Bayesian networks were considered instead of undirected graphical models. Since the associated conditional independence models are the same for Bayesian networks and undirected graphical models with one edge, our results prove the part of the conjecture for Bayesian networks with one-edge.\\

This paper is the first attempt at a kind of unification of non-cooperative and cooperative game theory mentioned in \cite[Section 6.3]{spohn2003} from the perspective of algebraic statistics as well as with methods from computational algebraic geometry. The main contributions and the structure of the paper are as follows: In Section~\ref{Sec 2} we  recall the  definitions from \cite{BI} of dependency equilibria, Spohn (CI) variety, and totally mixed CI equilibria. We prove, for generic payoff tables, that the Spohn CI variety for {\em one-edge undirected graphical models} is a curve which is the complete intersection of $n$ divisors in the Segre variety $\left(\P^1\right)^{n-2}\times\P^3$, of multi-degree 
$$(0,1,\ldots,1),\ldots, (1,\ldots,1,\underset{(i)}{0},1,\ldots,1), \ldots,(1,\ldots,1,0,1,1),(1,\ldots,1,2), (1,\ldots,1,2)$$ respectively
(Proposition~\ref{prop:dim 1} and Corollary \ref{co:gen com int}). In {\tt Macaulay2} \cite{M2}, many examples were computed using the {\tt GraphicalModels} package. These lead in Section~\ref{Section 3} to the explicit formulas for the degree and the genus of the Nash CI curve (Lemma~\ref{lem: degree of almost Nash curve} and Corollary~\ref{cor: genus of the generic curve}). We also prove that the Nash CI curve is connected (Lemma~\ref{lem: connected}). Lastly, Section~\ref{Sec 4} focuses on the analysis of  different notions of the universality of the Spohn CI variety for one-edge undirected graphical models. The universality of Nash equilibria was investigated by Datta. It is proven that every real algebraic variety is isomorphic to the set of totally mixed Nash equilibria of some three-player game, and also to the set of totally mixed Nash equilibria of an $n$-player game in which each player has binary choices (\cite[Theorem 1]{datta}). A similar universality concept appears for correlated equilibria where it is proven that any convex polytope in $\mathbb{R}^n$ can be realized as the correlated equilibrium payoffs of an $n$-player game \cite[Proposition 1]{correlatedeqpayoff}. We analyze the universality from the point of view of divisors as a first approach. We study whether any set of divisors with the same multi-degree as the ones mentioned above arises from 
the Spohn CI variety. In Proposition~\ref{cor: universality of divisors}, we conclude that this notion of universality does not hold for the Spohn CI variety. Following this fact, we investigate the base locus of linear systems of these divisors and conclude in Theorem~\ref{thm: irreducible Nash CI} that the Nash CI curve for generic games is smooth and irreducible. 
As a consequence, we deduce that for generic games for which there exists a totally mixed CI equilibrium, the set of totally mixed CI equilibria is a smooth manifold of dimension $1$.
The second approach follows the spirit of Datta's universality theorem for Nash equilibria in \cite{datta}. We prove two universality theorems for Spohn CI varieties for one-edge undirected graphical models.\\

\begin{corollary*}[{Corollary~\ref{co:affine univ 1}}]
      Let $S$ be any affine real algebraic variety. There exists a game with binary choices such that the affine open subset of Spohn CI variety $C_X$ for one-edge undirected graphical models is isomorphic to $S \times \mathbb{R}^1$. 
\end{corollary*}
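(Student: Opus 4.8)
The plan is to derive this corollary from the stronger universality theorem that the paper announces just before it — namely, the statement (which I expect is proved as a Theorem immediately preceding this corollary) that *any affine real algebraic variety in $\mathbb{R}^m$ defined by at most $m-1$ polynomials is isomorphic to an affine open subset of the Spohn CI variety $C_X$ for some game $X$.* Given an arbitrary affine real algebraic variety $S \subseteq \mathbb{R}^k$, cut out by polynomials $f_1, \dots, f_r \in \mathbb{R}[y_1, \dots, y_k]$, I would form the product $S \times \mathbb{R}^1 \subseteq \mathbb{R}^{k+1}$ and observe that it is still cut out by the \emph{same} $r$ polynomials $f_1, \dots, f_r$, now regarded as elements of $\mathbb{R}[y_1, \dots, y_k, y_{k+1}]$ that simply do not involve the extra coordinate $y_{k+1}$.

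The key point is a dimension/count comparison: in the ambient space $\mathbb{R}^{m}$ with $m = k+1$, the variety $S \times \mathbb{R}^1$ is defined by $r$ polynomials, and I claim $r \le m - 1 = k$. This is where the extra free $\mathbb{R}^1$ factor does its work. If the number of defining equations $r$ of $S$ already satisfies $r \le k = m-1$, we are immediately done. The case that must be handled is $r \ge k+1$, i.e.\ $S$ is cut out by more equations than its ambient dimension; but after taking the product with $\mathbb{R}^1$, the ambient dimension rises to $k+1$ while the number of equations needed stays at $r$. So the condition to verify is $r \le m-1 = (k+1) - 1 = k$, which is exactly the possibly-problematic regime. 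Thus the clean statement I would actually invoke is the following reformulation: the product $S \times \mathbb{R}^1$ is an affine real algebraic variety in $\mathbb{R}^{k+1}$, and the hypothesis of the preceding theorem requires its number of defining polynomials to be at most $(k+1)-1 = k$.

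**First I would** argue that $S \times \mathbb{R}^1$ can always be defined by at most $k$ polynomials in $\mathbb{R}^{k+1}$, regardless of how many equations $S$ itself needed. The reason is that $S \times \mathbb{R}^1$ has codimension at least one in $\mathbb{R}^{k+1}$ (its last coordinate is free, so its dimension is at least $1$), but more to the point one can use the standard fact that every real algebraic variety in $\mathbb{R}^{N}$ is the zero set of a \emph{single} polynomial $\sum_i f_i^2$; so taking $g = \sum_{i=1}^r f_i^2 \in \mathbb{R}[y_1,\dots,y_k] \subseteq \mathbb{R}[y_1,\dots,y_{k+1}]$, the product $S \times \mathbb{R}^1 = V(g)$ is cut out by the single polynomial $g$, and $1 \le k = m-1$ holds whenever $k \ge 1$. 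The degenerate case $k = 0$ (where $S$ is a finite set of points, possibly empty, in $\mathbb{R}^0$) must be treated separately: then $S \times \mathbb{R}^1 \subseteq \mathbb{R}^1$ is a finite union of horizontal lines, but again $S\times\mathbb{R}^1$ is a single hypersurface in $\mathbb{R}^1$ cut out by one equation, and the reformulation of the preceding theorem that allows at least one equation applies.

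**The main obstacle I expect** is not in this reduction — which is essentially a packaging argument — but in making sure the bookkeeping of the hypothesis ``at most $m-1$ polynomials'' is met in \emph{all} edge cases, and in confirming that the preceding theorem genuinely outputs an \emph{isomorphism of the product} $S \times \mathbb{R}^1$ (not merely of $S$) onto an affine open subset of $C_X$. Concretely, I would verify that the single-polynomial presentation $V(g)$ with $g = \sum f_i^2$ preserves the isomorphism type of the real variety $S \times \mathbb{R}^1$ (it does, since summing squares does not change the real zero set), so that feeding this presentation into the preceding theorem yields $C_X$ containing an affine open subset isomorphic precisely to $S \times \mathbb{R}^1$. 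The entire corollary is therefore a direct specialization of the general theorem, obtained by recognizing that the free $\mathbb{R}^1$ factor guarantees the codimension is large enough for the ``$m-1$ polynomials'' hypothesis to be satisfied automatically.
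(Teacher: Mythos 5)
Your route is genuinely different from the paper's, and it has a real gap at exactly the step you flagged and then waved away. First, on structure: the paper does not deduce \Cref{co:affine univ 1} from \Cref{thm: universality of Spohn CI} (which in fact appears \emph{after} the corollary); it deduces it from \Cref{prop:univ 1} --- an explicit construction showing that for any $n$-player binary game $\tilde X$ there is an $(n+2)$-player game $X$ with $W_X\simeq U_{\tilde X}\times\mathbb{R}^1$ --- combined with Datta's theorem producing $\tilde X$ with $U_{\tilde X}\simeq S$. Since \Cref{thm: universality of Spohn CI} is proved independently of the corollary, your reduction is not circular; the problem lies in the sum-of-squares step. The paper's notion of isomorphism is isomorphism of algebraic varieties, i.e.\ of coordinate rings; this is precisely the strengthening over Datta's ``stable isomorphism of semialgebraic sets'' that the paper takes pains to establish, and it is what the proof of \Cref{thm: universality of Spohn CI} actually delivers: a ring isomorphism between $\mathbb{R}[x_1,\ldots,x_n]/(G_1,\ldots,G_m)$ --- the ring presented by the \emph{input} polynomials --- and the coordinate ring of $W_X$. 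Replacing $f_1,\ldots,f_r$ by $g=\sum_i f_i^2$ preserves the real zero set but not this ring. For instance, take $S=\{0\}\subset\mathbb{R}^2$ presented by $f_1=y_1$, $f_2=y_2$, $f_3=y_1+y_2$ (so $r=3>k$, the regime where you need the trick). Then $S\times\mathbb{R}^1$ is the $y_3$-axis with coordinate ring $\mathbb{R}[y_3]$, whereas $g=y_1^2+y_2^2+(y_1+y_2)^2$ cuts out in $\mathbb{R}^3$ a surface (over $\mathbb{C}$, two planes), and $\mathbb{R}[y_1,y_2,y_3]/(g)$ is a two-dimensional ring not isomorphic to $\mathbb{R}[y_3]$. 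Feeding $g$ into \Cref{thm: universality of Spohn CI} therefore yields a $W_X$ isomorphic as a variety to that surface, whose real points merely coincide with $S\times\mathbb{R}^1$. What you recover is only the weaker, Datta-style semialgebraic statement, not the corollary in the sense the paper intends (note that the Murphy's-law remark following the corollary relies on the structure-preserving notion).

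Two smaller points. Your $k=0$ case invokes a ``reformulation of the preceding theorem that allows at least one equation,'' which does not exist --- the hypothesis $m<n$ is strict (one can instead pad the ambient space to fix this). And in the regime where no sum of squares is needed, i.e.\ $S$ is presented by $r\le k$ polynomials, your argument is correct and clean: $\mathbb{R}[y_1,\ldots,y_{k+1}]/(f_1,\ldots,f_r)$ is the polynomial ring in $y_{k+1}$ over the presented ring of $S$, so \Cref{thm: universality of Spohn CI} applies directly and gives an honest variety isomorphism. But that covers only such presentations, while the corollary claims the result for \emph{any} affine real algebraic variety; closing the general case is exactly what the paper's product construction in \Cref{prop:univ 1} achieves, since it tensors the coordinate ring of $U_{\tilde X}$ with $\mathbb{R}[t]$ on the nose rather than altering the defining equations of $S$.
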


\vspace{0.4cm}
\begin{theorem*}[{Theorem~\ref{thm: universality of Spohn CI}}]
        Any affine real algebraic variety $S \subseteq \mathbb{R}^n$ defined by $m$ polynomials $G_1, \ldots , G_m$ in $\mathbb{R}[x_1, \ldots, x_n]$ with $m<n$ is isomorphic to an affine open subset of Spohn CI variety $C_X$ for one-edge undirected graphical models of an $N$-player game with binary choices.  
\end{theorem*}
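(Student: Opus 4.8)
The plan is to follow the spirit of Datta's universality theorem, adapted to the divisorial description of $C_X$ established above. First I would pass to the affine chart of $(\P^1)^{N-2}\times\P^3$ obtained by dehomogenizing each $\P^1$ factor and the $\P^3$ factor, so that $C_X$ is cut out in $\mathbb{A}^{N+1}$ by the dehomogenizations of the $N$ divisors $D_1,\dots,D_N$. On the totally mixed locus the divisor $D_i$ of an isolated player $i$ (for $1\le i\le N-2$), having multidegree with a single $0$ in the $i$-th slot, becomes an \emph{affine-multilinear} polynomial in the remaining coordinates: linear in each of the other $\P^1$-coordinates and in the $\P^3$-coordinates, and independent of the $i$-th coordinate, with coefficients prescribed by the payoff table of player $i$. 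Since distinct players carry independent payoffs, the crucial reduction is to show that by choosing payoffs one can make each $D_i$ equal to any prescribed affine-multilinear polynomial of the admissible multidegree, and in particular, after zeroing out superfluous payoff entries, a polynomial depending on only a chosen handful of coordinates.

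Granting this, I would encode $S=V(G_1,\dots,G_m)$ as follows. Using the standard linearization of a polynomial system, introduce one auxiliary variable for each product arising when the monomials of $G_1,\dots,G_m$ are built up, so that $G_1=\cdots=G_m=0$ becomes an equivalent system of $m$ multilinear constraint equations together with a collection of \emph{bilinear definitional equations} of the shape $z-x_ax_b=0$. I would then set the number of players $N$ so that the $N+1$ affine coordinates split as the $n$ coordinates $x_1,\dots,x_n$ of $S$, one coordinate per auxiliary variable, and a few remaining slots; the isolated-player divisors realize the $m$ constraints and the definitional relations, ordered so that each definitional equation expresses one auxiliary variable as a polynomial in previously introduced coordinates. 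The two edge divisors $D_{N-1},D_N$ of multidegree $(1,\dots,1,2)$, together with the three $\P^3$-coordinates, are available to absorb one or two genuinely quadratic definitional relations, and any divisor not needed is forced to vanish identically by giving the corresponding player constant (indifferent) payoffs, whereupon the two rows of its Spohn matrix become proportional and it imposes no constraint.

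Finally I would read off the isomorphism: projection to $x_1,\dots,x_n$ identifies $C_X$ in this chart with $S$, the inverse being given by the definitional equations, which express every auxiliary coordinate as a polynomial in $x_1,\dots,x_n$. The precise bookkeeping is where the hypothesis $m<n$ enters: with $r$ auxiliary variables one has $N+1=n+r$ coordinates and $N=n+r-1$ divisor-equations, while the construction needs $m$ constraint equations and $r$ definitional ones, which are simultaneously available exactly when $m+r\le n+r-1$, that is when $m\le n-1$. When $m=n$ one definitional equation is missing, one auxiliary coordinate stays free, and the construction produces $S\times\mathbb{R}^1$ instead, which is precisely Corollary~\ref{co:affine univ 1}. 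I expect the main obstacle to be the first step: verifying that the payoff-to-coefficient map of an isolated player surjects onto affine-multilinear polynomials of the prescribed multidegree, and checking that the Segre (conditional independence) parametrization together with the two edge divisors introduces no spurious components and does not obstruct the isomorphism on the chosen affine chart.
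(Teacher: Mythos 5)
Your overall strategy is the same Datta-style linearization the paper uses (auxiliary variables for the monomials, bilinear definitional equations, constraints realized by isolated-player divisors, indifferent payoffs to switch off unused players), and your first step is correct: the payoff-to-divisor map is surjective for every isolated player $i\le N-2$. The genuine gap is your treatment of the two edge divisors. You propose that $D_{N-1},D_N$ ``are available to absorb one or two genuinely quadratic definitional relations,'' but by the paper's own Proposition~\ref{cor: universality of divisors} these divisors cannot be prescribed freely: the images $\Lambda_{N-1},\Lambda_N$ consist only of quadrics of the special shape $A\tau_{1,1}\tau_{2,1}+B\tau_{1,2}\tau_{2,1}+C\tau_{1,1}\tau_{2,2}+(B+C-A)\tau_{1,2}\tau_{2,2}$ (resp.\ the column analogue), tensored with multilinear forms in the $\sigma$'s. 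Concretely, after dehomogenizing ($\tau_{2,2}=1$), every monomial of an edge divisor is divisible by one of $\tau_{1,1},\tau_{1,2},\tau_{2,1}$, because $\tau_{2,2}^2$ is not among the allowed quadratic monomials. Hence an edge divisor can never realize any equation having a term free of the $\tau$-coordinates --- in particular neither a constraint $G_i$ nor a definitional equation $z-x_ax_b$ in which $x_ax_b$ is a product of two $\sigma$-coordinates. Your count ``$m+r\le n+r-1$'' therefore does not close the argument: it tacitly assumes every equation can be assigned to every divisor, whereas the $m$ constraints and essentially all $r$ definitional equations must be assigned to isolated players, and the two edge players can carry almost nothing.

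The paper resolves exactly this point by never asking the edge divisors to encode data of $S$: it chooses payoffs so that the two edge divisors are the factored quadrics $\tau_{2,2}(\tau_{1,1}+\tau_{1,2})$ and $\tau_{2,2}(\tau_{1,1}+\tau_{2,1})$ (these do satisfy the coefficient constraint $D=B+C-A$), so on the chart $\tau_{2,2}=1$ they impose the linear relations $\tau_{1,2}=-\tau_{1,1}$ and $\tau_{2,1}=-\tau_{1,1}$, collapsing the three $\tau$-coordinates to the single coordinate $x_n=\tau_{1,1}$. All powers of $x_n$ are then built by isolated players via the chain $\sigma_1^{(n,j)}-\tau_{1,1}\,\sigma_1^{(n,j-1)}$, which is linear in the $\tau$'s and hence lies in the (full) linear system of an isolated player. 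The hypothesis $m<n$ enters precisely here: since both edge players are reserved for the two linear $\tau$-relations, the $m$ constraints must fit among the remaining $n-1$ ``base'' isolated players, i.e.\ $m\le n-1$. Finally, your plan also omits the relabeling step that both Datta and the paper need: the divisor of player $i$ cannot involve player $i$'s own coordinate (a constraint you state in your first paragraph but never enforce when assigning equations to players), and one must permute the assignment --- by Datta's combinatorial argument --- so that, e.g., the equation defining $\sigma_1^{(i,j)}$ is not attached to the player carrying that variable.
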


\section{Nash and dependency equilibria meet graphical models}\label{Sec 2}
\subsection{Dependency equilibria and Spohn CI variety}\label{sec: 2.1}
In this section, we briefly introduce the Spohn conditional independence (CI) variety, the dependency equilibria and the conditional independence (CI) equilibria. We work in the setting of normal form games $X$ with $n$ players. For $i\in[n]$, the $i$th player can select from $[d_i]$ strategies and its payoff table is a tensor $X^{(i)}$ of format $d_1 \times \cdots \times d_n$ with real entries. Here, $X^{(i)}_{j_1 \ldots j_n}$ represents the payoff that player $i$ receives when player $1$ chooses strategy $j_1$, player $2$ chooses strategy $j_2$, etc. Let $V =\mathbb{R}^{d_1} \times \cdots \times \mathbb{R}^{d_n}$ be the real vector space of all tensors, and $\mathbb{P}(V)$ the corresponding projective space. Let $p_{j_1\ldots j_n}$ be the coordinates of $\P(V)$. The entry $p_{j_1\ldots j_n}$ is the probability that the first player chooses the strategy $j_1$, the second player $j_2$, etc. We study the case of totally mixed equilibria points, i.e.\ positive real points of $\P(V)$ in the open probability simplex $\Delta:=\Delta_{d_1 \ldots d_{n}-1}^{\circ}$.  The reason why we consider the positive real points is primarily rooted in the technical aspects inherent to the definition of dependency equilibrium. In particular, as we consider the conditional expected payoffs, it is essential to avoid the cases where the denominator in the conditional probabilities becomes zero. An additional reason is our investigation of universality for Spohn CI varieties in Section~\ref{Sec 4}, similar to Datta's universality result for totally mixed Nash equilibria \cite{datta}. The extension of the definition of dependency equilibrium to include the boundary of the probability simplex is explored in the ongoing work~\cite{SRR23}.

\para 

The \textit{expected payoff} of the $i$th player is defined as the following dot product:
\[
PX^{(i)} = \displaystyle\sum_{j_1=1}^{d_1}\cdots \sum_{j_n=1}^{d_n}X^{(i)}_{j_1\cdots j_n}p_{j_1\cdots j_n}.
\]

The classical theory of Nash equilibria studies the (mixed) strategies (or tensors) $P~\in~\Delta_{d_1 \cdots d_n -1}$ where no player can increase their expected payoff by changing their mixed strategy while assuming the other players have fixed mixed strategies. The concept of dependency equilibrium was introduced by a prominent philosopher Wolfgang Spohn in \cite{spohn2003}. To incorporate causal dependencies between the players, one considers conditional probabilities. The \textit{conditional expected payoff} of the $i$th player is then defined as the expected payoff conditioned on player $i$ having fixed pure strategy $k \in [d_i]$ 
\[
\displaystyle\sum_{j_1=1}^{d_1}\cdots\widehat{\sum_{j_i=1}^{d_i}}\cdots \sum_{j_n=1}^{d_n}X^{(i)}_{j_1\cdots k \cdots  j_n}\frac{p_{j_1\cdots k \cdots j_n}}{p_{+\cdots+k+\cdots+}},
\]
where $$p_{+\cdots+k+\cdots+}=
\displaystyle\sum_{j_1=1}^{d_1}\cdots\widehat{\sum_{j_i=1}^{d_i}}\cdots \sum_{j_n=1}^{d_n}
p_{j_1\cdots k \cdots j_n}.
$$
 We say that a tensor $P\in\Delta$ is a (totally mixed) \textit{dependency equilibrium} of the game if the conditional expected payoff of each player $i$ does not depend on the strategy $k$ of player $i$.  Portakal and Sturmfels \cite{BI} established the algebro-geometric foundations for the theory of dependency equilibrium by studying an algebraic variety in complex projective space obtained by relaxing the reality and positivity constraints. 
This is called the {\emph{Spohn variety}} $\mathcal{V}_X$ of the game $X$, which is defined as the $2\times 2$ minors of the matrices following $d_i \times 2$ matrices of linear forms $M_1, \ldots, M_n$:
 \begin{equation}\label{eq: Spohn matrices}
M_i \,=\, M_i(P) \,\,:= \,\,\,\begin{bmatrix}
\vdots & \vdots \\
\,\,p_{+\cdots + k + \cdots+}\, &\,\, \displaystyle\sum_{j_1 = 1}^{d_1} \cdots \widehat{\displaystyle\sum_{j_{i} = 1}^{d_{i}}} \cdots \displaystyle\sum_{j_n = 1}^{d_n} X^{(i)}_{j_1 \cdots  k  \cdots j_n} p_{j_1 \cdots  k  \cdots j_n}\, \\
\vdots & \vdots 
\end{bmatrix} \! .
\end{equation}
The set of dependency equilibria of the game $X$ is the intersection $\mathcal{V}_X \cap \Delta$.  By \cite[Theorem 6]{BI}, for generic payoff tables $X^{(1)},\ldots, X^{(n)}$, the Spohn variety is irreducible of codimension $d_1+\cdots +d_n-n$ and degree $d_1\cdots d_n$. We say that  $P\in\mathcal{V}_X$ is a \textit{Nash point} if $P$ is a tensor of rank one. Thus, the set of Nash points is the intersection of $\mathcal{V}_X$ with the Segre variety $\P^{d_1-1}\times\cdots\times\P^{d_n-1}$. Then, the set of totally mixed Nash equilibria is 
the intersection
\begin{equation}\label{eq:int equi}
\mathcal{V}_X\cap\left( \P^{d_1-1}\times\cdots\times\P^{d_n-1}\right) \cap \Delta.
\end{equation}

Note that the Spohn variety is generically high dimensional. Since the set of all dependency equilibria is $\V\cap\Delta$, it is either empty or it has the same dimension as $\V$. In order to drop the dimension and investigate different cases of dependencies between the players of the game, we focus on the intersection of $\V$ with statistical models arising from conditional independence statements in $\Delta$. To this aim, we model the dependencies between the players in terms of {\em graphical models} whose underlying graph is an undirected graph on $n$ vertices. We consider the $n$ players as discrete random variables with state spaces $[d_1], \ldots, [d_n]$. An edge between two vertices corresponds to the dependency of the choices of the associated players in the game. We consider the {\em  global Markov property} associated to the undirected graphical model. This consists of all conditional independence statements \cite[Definition 4.1.2]{Sul} of form $A \CI B \ | \ C$ such that $C$ separates $A$ and $B$ where $A,B,C$ are pairwise disjoint subsets of $[n]$ \cite[Definition 13.1.1]{Sul}. Let $\mathcal{C}$ be the collection of these CI statements. Each CI statement translates into a system of homogeneous quadratic constraints in the tensor entries $p_{j_1, \ldots, j_n}$ \cite[Proposition 4.1.6]{Sul}. We call the projective variety $\mathcal{M}_\mathcal{C}\subseteq\mathbb{P}(V)$ defined by these quadrics arising from all conditional independence statements the {\em conditional independence model}. Here we assume that components lying in the hyperplanes
$\{p_{j_1j_2 \cdots j_n}=0\}$ and $\{p_{++ \cdots +} = 0\}$ have been removed. The intersection of $\mathcal{M}_\mathcal{C}$ with the open simplex consists of the set of all the probabilities satisfying  the conditional independence statements in $\mathcal{C}$.
The {\em Spohn conditional independence (CI) variety} is defined as the intersection
of the Spohn variety with the CI model:
\begin{equation}
\label{eq:spohnCI} \mathcal{V}_{X,\mathcal{C}} \,\, = \,\,
\mathcal{V}_X \, \cap \, \mathcal{M}_\mathcal{C}. 
\end{equation}
We again assume that components lying in the special hyperplanes above have been removed. We call the intersection $\mathcal{V}_{X,\mathcal{C}} \cap \Delta$ the set of {\em totally mixed conditional independence (CI) equilibria}.
In other words, the set of totally mixed CI equilibria consists of all the dependency equilibria satisfying the CI statements of the graphical model, which translates into the dependencies between players. Note that since we consider the totally mixed equilibria, by \cite[Theorem 13.1.4]{Sul}, the specific choice of global Markov property for the graphical model becomes inconsequential.
An important first observation is that totally mixed Nash equilibria and dependency equilibria can be regarded as special cases of totally mixed CI equilibria.

\begin{example}\label{ex: Nash+dependency}
If the graphical model is the complete graph, then
the associated collection of CI statements $\mathcal{C}$ is empty and $\mathcal{M}_{\mathcal{C}} = \P(V)$. The Spohn CI variety coincides with the Spohn variety and the set of dependency equilibria is the set of totally mixed CI equilibria. On the other hand, if the graphical model has no edges, i.e.\ it consists of isolated vertices, then $\mathcal{C}$ consists of all possible CI statements and $\mathcal{M}_\mathcal{C}$ equals to the Segre variety $\P^{d_1-1}\times\cdots\times\P^{d_n-1}$. By equation (\ref{eq:int equi}), the set of totally mixed Nash equilibria coincides with the set of totally mixed CI equilibria. In particular, the set of CI equilibria coming from a collection of CI statements $\mathcal{C}$ is always contained in the set of dependency equilibria and it contains the set of totally mixed Nash equilibria. 
 \end{example}

\para
\subsection{Nash conditional independence curve}
As seen in Example~\ref{ex: Nash+dependency}, the concept of  CI equilibria places Nash and dependency equilibria at opposite ends of the spectrum within the context of graphical models. In this paper, we embark on exploring the intermediate scenarios that exist between these two extremes. In the sequel, we study the Spohn CI variety of one-edge undirected graphical models for a game with binary choices i.e.\ $d_1=\cdots=d_n=2$. The Spohn variety and $\Mc$ are projective subvarieties in the projective space $\P^{2^n-1}$. In particular, $\Mc$ is the Segre variety $\left(\P^1\right)^{n-2}\times \P^3\subset \P^{2^n-1}$. Throughout this paper, $C_X$ denotes the Spohn CI variety for one-edge undirected graphical models on $n$ binary random variables. The goal of this paper is to study the set of totally mixed CI equilibria for one-edge undirected graphical models through the algebro-geometric analysis of $C_X$.
\para

\begin{example}[3-player game]
The Spohn variety for a generic 3-player game with binary choices is a fourfold of degree 8 in $\P^7$. Moreover, by intersecting $\mathcal{V}_X$ with the Segre variety $\P^1 \times \P^{1} \times \P^{1}$ we obtain the totally mixed Nash equilibria as in \cite[Example 3]{BI}. Now, let us consider a non-generic game:  El Farol bar problem for three players. Three friends would like to meet in a bar which has limited seats. Each player has two choices: Go=1 or Stay=2. The people who go to the bar are rewarded when there are few of them (in our case two) and forfeited if the bar is overcrowded, i.e.\ if all three go to the bar. Those who choose to stay at home are rewarded when the bar is overcrowded and forfeited if they are the only one who showed up. We represent this in terms of $2 \times 2 \times 2$ payoff tensor for each player.

\begin{equation}
\begin{matrix} 
   X^{(1)} = \\ X^{(2)} =  \\ X^{(3)}= 
\end{matrix}\,
\bordermatrix{     
            & 111        & 121      &   112   & 122     & 211     & 221     & 212     & 222 \cr
            & -1 & 2  & 2 & 0 & 1 & 1 & 1 &1    \cr
            & -1 & 1  & 2 & 1 & 2 & 1 & 0 &1    \cr
            & -1 & 2  & 1 & 1 & 2 & 0 & 1 &1    \cr
}.
\end{equation}
The Spohn variety $\mathcal{V}_X$ is a 5-dimensional reducible variety of degree 8 in $\P^7$. In particular, we obtain that the Spohn CI variety $C_X$ is a reducible surface of degree 8 in the Segre variety $\P^1 \times \P^3$.     
\end{example}
\para \para

For a generic game, we obtain by \cite[Theorem 6]{BI} that 
$$\mathrm{codim}_{\P^{2^n-1}} \V=d_1+\cdots+d_n-n= 2n-n= n.$$
Since the dimension of $\Mc$ is $n+1$, we expect that for generic payoff tables, the Spohn CI variety is a curve (see \cite[Conjecture 23]{BI}).

\para

In order to define the ideal of $\Vc$, our first step is to evaluate the parametrization of the Segre variety $\Mc$ in the equations of $\V$, i.e.\ in the determinant of $M_i$ for every $i \in [n]$. We utilize the parametrization of $\Mc$ given by the Segre embedding 
\[
p_{j_1\ldots j_n}=\sigma^{(1)}_{j_1}\cdots \sigma^{(n-2)}_{j_{n-2}}\tau_{j_{n-1},j_n}
\]
for $j_1,\ldots,j_n\in[2]$. Evaluating the determinant of $M_i$, for $i\leq n-2$, at this parametrization, we obtain that the determinant of $M_i$ is the product of $\sigma^{(i)}_{1}\sigma^{(i)}_{2} \left(
\sum_{j_1,\ldots,\widehat{j_i},\ldots,j_n}\sigma^{(1)}_{j_1}\cdots \widehat{\sigma^{(i)}_{j_{i}}}\cdots \sigma^{(n-2)}_{j_{n-2}}\tau_{j_{n-1},j_n}
\right)$ with
\[
\begin{array}{ll}
F_i:=
\displaystyle\sum_{j_1,\ldots,\widehat{j_i},\ldots,j_n}
\left(X^{(i)}_{j_1\cdots2\cdots j_n}- X^{(i)}_{j_1\cdots 1 \cdots j_n} \right)
\sigma^{(1)}_{j_1}\cdots \widehat{\sigma^{(i)}_{j_{i}}}\cdots \sigma^{(n-2)}_{j_{n-2}}\tau_{j_{n-1},j_n}
.
\end{array}
\]

\noindent Similarly, evaluating  the determinant of $M_{n-1}$ at the parametrization, we obtain that the determinant of $M_{n-1}$ and $M_n$ are, respectively, the product of $\sum_{j_1,\ldots,j_{n-2}}\sigma^{(1)}_{j_1}\cdots\sigma^{(n-2)}_{j_{n-2}}
$ with 
\[
F_{n-1}:= \mathrm{det}
\left(
\begin{array}{cc}
\tau_{1,1}+\tau_{1,2}& 
\displaystyle\sum_{j_1,\ldots,j_{n-2},j_n}X^{(n-1)}_{j_1\cdots j_{n-2} 1 j_n}\sigma^{(1)}_{j_1}\cdots\sigma^{(n-2)}_{j_{n-2}}\tau_{1,j_{n}}
\\
\tau_{2,1}+\tau_{2,2} & \displaystyle\sum_{j_1,\ldots j_{n-2},j_n}X^{(n-1)}_{j_1\cdots j_{n-2} 2 j_n}\sigma^{(1)}_{j_1}\cdots\sigma^{(n-2)}_{j_{n-2}}\tau_{2,j_{n}}
\end{array}
\right),
\]

\[
F_n:=\mathrm{det}\left(
\begin{array}{cc}
\tau_{1,1}+\tau_{2,1}& 
\displaystyle\sum_{j_1,\ldots,j_{n-1}}X^{(n)}_{j_1\cdots j_{n-1} 1}\sigma^{(1)}_{j_1}\cdots\sigma^{(n-2)}_{j_{n-2}}\tau_{j_{n-1},1}
\\

\tau_{1,2}+\tau_{2,2} & \displaystyle\sum_{j_1,\ldots,j_{n-1}}X^{(n)}_{j_1\cdots j_{n-1} 2}\sigma^{(1)}_{j_1}\cdots\sigma^{(n-2)}_{j_{n-2}}\tau_{j_{n-1},2}

\end{array}
\right).
\]

\noindent  Given $n$ payoff tables, the Spohn CI variety can be written as $C_X~=~\mathbb{V}(F_1,\ldots,F_n)$. This variety is the object of study of this paper. In particular, $C_X$ is the intersection of $n$ divisors in $\Mc$ with multi-degree
$$(0,1,\ldots,1),\ldots, (1,\ldots,1,\underset{(i)}{0},1,\ldots,1), \ldots,(1,\ldots,1,0,1,1),(1,\ldots,1,2), (1,\ldots,1,2).$$ 

\begin{example}
For $n=2$, $\Mc=\P^3$ and, hence, $\Vc=\V$. From \cite[Theorem 8]{BI} we deduce that for generic payoff tables $C_X$ is an elliptic curve of degree $4$ in $\P^3$.\end{example}
\vspace{0.2cm}
\begin{proposition}\label{prop:dim 1}
For a generic game $X$, $C_X$ is a curve. In this case, we call $C_X$ the \emph{Nash conditional independence (CI) curve}.
\end{proposition}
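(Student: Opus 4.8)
The plan is to read the dimension off the codimension count and then verify that this expected value is attained generically. Since $\dim\Mc=(n-2)+3=n+1$ and $C_X=\mathbb{V}(F_1,\ldots,F_n)$ is cut out of $\Mc$ by $n$ hypersurface sections, each section lowers the dimension by at most one, so every nonempty component of $C_X$ has dimension at least $(n+1)-n=1$. The entire content of the proposition is therefore to show that, for generic payoff tables, $C_X$ is \emph{nonempty} and of dimension \emph{exactly} $1$; the naive count by itself excludes neither emptiness nor excess dimension.

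The key structural feature I would exploit is that each $F_i$ is \emph{linear} in the entries of the single payoff tensor $X^{(i)}$ (this is visible both in the formula for $F_i$ with $i\le n-2$ and in the determinantal expressions for $F_{n-1},F_n$, whose first columns involve only $\tau$), and that $X^{(1)},\ldots,X^{(n)}$ occupy pairwise disjoint blocks of parameters. Writing $\mathcal{X}$ for the affine space of complex payoff tensors, I would form the incidence variety
\[
\mathcal{I}\;=\;\bigl\{\,(X,[\sigma,\tau])\in\mathcal{X}\times\Mc \;:\; F_i(X,\sigma,\tau)=0 \text{ for all } i\in[n]\,\bigr\},
\]
and restrict to $\mathcal{I}_U:=\mathcal{I}\cap(\mathcal{X}\times U)$, where $U\subseteq\Mc$ is the dense open complement of the special hyperplanes, chosen so that for each point of $U$ every linear form $X^{(i)}\mapsto F_i$ is nonzero. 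For fixed $[\sigma,\tau]\in U$ the $n$ equations are then $n$ independent nontrivial linear conditions on the disjoint blocks $X^{(1)},\ldots,X^{(n)}$, so the fiber of the projection $\pi_2\colon\mathcal{I}_U\to U$ is an affine subspace of codimension exactly $n$. As $\pi_2$ is thus an affine bundle over the irreducible $(n+1)$-dimensional base $U$, the space $\mathcal{I}_U$ is irreducible of dimension $\dim\mathcal{X}+(n+1)-n=\dim\mathcal{X}+1$.

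Next I would study the other projection $\pi_1\colon\mathcal{I}_U\to\mathcal{X}$, whose fiber over $X$ is exactly $C_X\cap U$. Because $\mathcal{I}_U$ is irreducible, the minimal fiber dimension of $\pi_1$ equals $\dim\mathcal{I}_U-\dim\overline{\pi_1(\mathcal{I}_U)}$ and is attained on a dense open subset. It therefore suffices to produce a \emph{single} game $X_0$ with $\dim(C_{X_0}\cap U)=1$, which I would obtain by an explicit {\tt Macaulay2} computation (matching the computations announced in the introduction). The existence of one such fiber forces the minimal fiber dimension to be at most $1$, whence $\dim\overline{\pi_1(\mathcal{I}_U)}\ge\dim\mathcal{I}_U-1=\dim\mathcal{X}$; thus $\pi_1$ is dominant, and by the theorem on the dimension of the fibers the generic fiber $C_X\cap U$ is nonempty of dimension exactly $\dim\mathcal{I}_U-\dim\mathcal{X}=1$. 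Together with the lower bound of the first paragraph this shows that $C_X$ is a curve for generic $X$.

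The main obstacle is precisely the passage from ``expected dimension'' to ``actual dimension'': one must rule out emptiness and excess dimension simultaneously for generic $X$. The incidence variety reduces both issues to two manageable facts—linearity of the $F_i$ in each tensor $X^{(i)}$ (giving $\dim\mathcal{I}_U=\dim\mathcal{X}+1$) and a single explicit one-dimensional fiber (giving dominance of $\pi_1$). A more conceptual but heavier alternative to the explicit example would be to compute the product class $\prod_i[F_i]$ in $A^\bullet(\Mc)\cong\mathbb{Z}[h_1,\ldots,h_{n-2},H]/(h_1^2,\ldots,h_{n-2}^2,H^4)$ from the given multidegrees and check that it is a nonzero effective $1$-cycle class, which again yields generic nonemptiness.
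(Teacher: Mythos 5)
Your strategy runs closely parallel to the paper's own proof: both build an incidence correspondence between games and points of $\Mc$ and reduce the proposition, via fiber-dimension theory, to exhibiting a single game with a one-dimensional fiber. Your route to the dimension count is genuinely different and rather clean: exploiting linearity of $F_i$ in the disjoint blocks $X^{(i)}$, the fibers of $\pi_2$ over $U$ are linear subspaces of codimension exactly $n$, so $\mathcal{I}_U$ is irreducible of dimension $\dim\mathcal{X}+1$; the paper instead projectivizes the payoff space and invokes upper semicontinuity of fiber dimension for the proper projection $\pi\colon C\to\left(\P^{2^n-1}\right)^n$. However, as written your proof has two genuine gaps.

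First, you only control $C_X\cap U$, whereas the proposition (and the paper's proof) concerns all of $C_X=\mathbb{V}(F_1,\ldots,F_n)$. The excision of $U$ is not harmless, because the locus where your argument breaks is nonempty and large: while for $i\le n-2$ the functional $X^{(i)}\mapsto F_i$ is nonzero at \emph{every} point of $\Mc$, the functional $X^{(n-1)}\mapsto F_{n-1}$ vanishes identically on $\left(\P^1\right)^{n-2}\times\left(L_1\cup L_2\cup L_3\right)$, where $L_1,L_2,L_3\subset\P^3$ are the lines $\{\tau_{1,1}=\tau_{1,2}=0\}$, $\{\tau_{2,1}=\tau_{2,2}=0\}$, $\{\tau_{1,1}+\tau_{1,2}=\tau_{2,1}+\tau_{2,2}=0\}$, and similarly for $X^{(n)}\mapsto F_n$ (these are exactly the base loci of $\Lambda_{n-1}$ and $\Lambda_n$ computed in Section 4 of the paper). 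These loci have dimension $n-1$, which is at least $2$ for $n\ge 3$, and your fiber-dimension theorem says nothing about components of $C_X$ contained in them; the lower bound of your first paragraph bounds such components from below, not above. So your final sentence does not follow: for all you have shown, a generic $C_X$ could contain a surface inside $\left(\P^1\right)^{n-2}\times L_j$. Closing this requires extra work, either a stratified repetition of your incidence argument on $\Mc\setminus U$, or the paper's route, where semicontinuity over the game space bounds the whole fiber $C_X$ at once, with no excised locus. (Your Chow-ring alternative has the same defect: it yields $C_X\neq\emptyset$, but not $C_X\cap U\neq\emptyset$, which is what dominance of $\pi_1$ needs.)

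Second, the witness. The proposition is a claim for every number of players $n$, so the one-dimensional fiber must be produced for arbitrary $n$; a {\tt Macaulay2} computation can certify it only for finitely many small values of $n$, not for all $n$ at once. This is precisely where the bulk of the paper's proof lies: it constructs, symbolically for all $n$, payoff tensors for which each $F_k$ with $k\le n-2$ factors as $\prod_{i\neq k}(k\sigma^{(i)}_1-\sigma^{(i)}_2)$ times a linear form in $\tau$, identifies the components of $\mathbb{V}(F_1,\ldots,F_{n-2})$ as copies of $\P^3$, $\P^1\times\P^2$, and $\P^1\times\P^1\times\P^1$, and then verifies that two structured choices of $F_{n-1},F_n$ cut this threefold down to dimension one for generic coefficients. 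Without a construction of this kind, valid for every $n$, your proof is incomplete exactly at its load-bearing step.
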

\begin{proof}
Seeing the payoff tables $X^{(1)},\ldots,X^{(n)}$ as variables in $n$ copies of $\P^{2^n-1}$, we consider the variety $C$ defined as the projective subvariety  $\mathbb{V}(F_1,\ldots,F_n)$  of $\left(\P^{2^n-1}\right)^n\times~\Mc$. Here  $F_1,\ldots,F_n$ are the $n$ polynomials defining $C_X$ introduced above. Also, we consider the projection
\[\begin{array}{cccc}
\pi:&C&\longrightarrow &\left(\P^{2^n-1}\right)^n\\
 & (X^{(1)},\ldots,X^{(n)},p)&\longmapsto&(X^{(1)},\ldots,X^{(n)}).
\end{array}
\]
Note that, given a point $X\in \left(\P^{2^n-1}\right)^n$, the fiber of $X $ through $\pi$ is $C_X$.
Using that the dimension of the fibers of $\pi$ is upper semicontinuous, we obtain that there exists an open subset $U\subseteq\left(\P^{2^n-1}\right)^n $ such that, for every $X\in U$, $\dim C_X\leq 1$. Moreover, since for every $X$, $C_X$ is the zero locus of $n$ equations, $C_X$ has codimension at most $n$ in $\Mc$. Hence $\dim C_X \geq 1$ for every $X\in \left(\P^{2^n-1}\right)^n$. Thus, we conclude that for every $X\in U$, $\dim(C_X)=1$. In order to check that $U$ is dense, we prove that it is non-empty. 
For $k\leq n-2$ we fix $X^{(k)}$ such that the following equation holds:
\[
F_k = \displaystyle\prod_{i\neq k}^{n-2} (k\sigma^{(i)}_1-\sigma^{(i)}_2)(\tau_{1,1}+k\tau_{1,2}+k^2\tau_{2,1}+k^3\tau_{2,2}).
\]
Note that this is possible because if we expand the right-hand side of the above equation, we get a polynomial of the same format as $F_k$. Then, one can check that $\mathbb{V}(F_1,\ldots,F_{n-2})$ has dimension $3$ and its irreducible components are isomorphic to $\P^3$, $\P^1\times\P^2$, or $\P^1\times\P^1\times\P^1$. Following the same idea, one can fix $X^{(n-1)}$ and $X^{(n)}$ such that $F_{n-1}$ and $F_n$ are respectively as follows
\[
\begin{split}
&\prod_{i=1}^{n-2} ((n-1)\sigma^{(i)}_1-\sigma^{(i)}_2)(A_{n-1}\tau_{1,1}\tau_{2,1}+B_{n-1}\tau_{1,2}\tau_{2,1}+C_{n-1}\tau_{1,1}\tau_{2,2}+ D_{n-1}\tau_{1,2}\tau_{2,2}),
\\
&\prod_{i=1}^{n-2} (n\sigma^{(i)}_1-\sigma^{(i)}_2)
(A_{n}\tau_{1,1}\tau_{1,2}+B_{n}\tau_{2,2}\tau_{1,1}+C_{n}\tau_{1,2}\tau_{2,1}+ D_n\tau_{2,1}\tau_{2,2}),
\end{split}
\]
where $D_{n-1} = B_{n-1}+C_{n-1}-A_{n-1}$, $D_n=B_{n}+C_{n}-A_{n}$, and the coefficients $A_k,B_k,C_k$ depend linearly on $X^{(k)}$ for $k\geq n-1$. Finally, one can check that for generic coefficients $A_n,B_n,C_n,A_{n-1},B_{n-1},C_{n-1}$, the intersection of the threefold $\mathbb{V}(F_1,\ldots,F_{n-2})$ with $\mathbb{V}(F_{n-1},F_n)$ has dimension $1$. 
Hence, we  conclude that $U$ is non-empty.
\end{proof}

\begin{corollary}\label{co:gen com int}
For generic payoff tables, the Nash CI curve is a complete intersection of $n$ divisors of $\Mc$.
\end{corollary}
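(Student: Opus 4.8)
The plan is to deduce the corollary directly from Proposition~\ref{prop:dim 1} via a dimension count, exploiting the smoothness of the ambient variety. First I would record the dimension of the ambient space: since $\Mc=\left(\P^1\right)^{n-2}\times\P^3$ is a product of projective spaces, it is a smooth irreducible projective variety, and in particular Cohen--Macaulay, of dimension $(n-2)+3=n+1$. By construction $C_X=\mathbb{V}(F_1,\ldots,F_n)$ is cut out by exactly $n$ equations in $\Mc$, namely as the intersection of the $n$ divisors $\mathbb{V}(F_1),\ldots,\mathbb{V}(F_n)$ whose multi-degrees are the ones listed above.

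Next I would pin down the codimension. By Krull's height theorem, every irreducible component of $\mathbb{V}(F_1,\ldots,F_n)$ has codimension at most $n$ in $\Mc$, hence dimension at least $1$. On the other hand, Proposition~\ref{prop:dim 1} gives, for a generic game $X$, that $\dim C_X=1$; crucially, the upper-semicontinuity argument used there bounds the dimension of the entire fiber $C_X$, so in fact every component of $C_X$ has dimension at most $1$. Combining the two bounds shows that $C_X$ is pure of dimension $1$, that is, of codimension exactly $n$ in $\Mc$.

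Finally I would invoke the standard criterion: in a Cohen--Macaulay variety, a closed subscheme whose codimension equals the number of defining equations is a complete intersection; equivalently, because the codimension of $\mathbb{V}(F_1,\ldots,F_n)$ equals the number $n$ of generators and $\Mc$ is Cohen--Macaulay, the forms $F_1,\ldots,F_n$ form a regular sequence in the local rings of $\Mc$ along $C_X$. This is precisely the assertion that, for generic payoff tables, $C_X$ is the complete intersection of the $n$ divisors $\mathbb{V}(F_i)$ in $\Mc$.

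The only substantive input is Proposition~\ref{prop:dim 1}; the remainder is a formal dimension/commutative-algebra argument. The one point to be careful about is that the bound $\dim C_X\le 1$ must hold for \emph{every} component and not merely for one of them, since otherwise a spurious higher-dimensional component would violate the codimension count and obstruct the complete-intersection conclusion. This is exactly what the semicontinuity of fiber dimension in Proposition~\ref{prop:dim 1} supplies, while Krull's theorem forces each component to have dimension at least $1$; together they leave no room for embedded or excess-dimensional components, so the conclusion is genuinely global.
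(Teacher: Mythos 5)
Your proposal is correct and follows essentially the same route as the paper, which states the corollary as an immediate consequence of Proposition~\ref{prop:dim 1}: since $C_X$ is cut out by $n$ equations in the $(n+1)$-dimensional variety $\Mc$ and has dimension exactly $1$ for generic payoff tables, it has codimension equal to the number of defining equations and is therefore a complete intersection. Your added care about purity (every component pinned to dimension $1$ by combining Krull's height theorem with the semicontinuity bound on the whole fiber) and the Cohen--Macaulay/regular-sequence justification simply makes explicit what the paper leaves implicit.
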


\para 

One may show that the specific Nash CI curve constructed in the proof of Proposition~\ref{prop:dim 1} is connected. By the semicontinuity theorem, we then obtain that for generic games, the Nash CI curve is connected. In the next section, we yet use a different argument to prove in Lemma~\ref{lem: connected} that the Nash CI curve is connected.

\para 

\begin{example}\label{ex: generic 3-player game}
We consider the 3-player game from \cite[Section 6.2]{CBMS} with the following payoff tensors
\begin{equation}\label{eq: 3-player generic}
\begin{matrix} 
   X^{(1)} = \\ X^{(2)} =  \\ X^{(3)}= 
\end{matrix}\,
\bordermatrix{     
            & 111        & 121      &   112   & 122     & 211     & 221     & 212     & 222 \cr
            & 0 & 6  & 11 & 1 & 6 & 4 & 6 &8    \cr
            & 12 & 7  & 6 & 8 & 10 & 12 & 8 &1    \cr
            & 11 & 11  & 3 & 3 & 0 & 14 & 2 &7    \cr
}.
\end{equation}    

A {\tt Macaulay2} computation shows that the Spohn CI variety $C_X$ is an irreducible curve of genus $3$ and degree $8$. Hence, $C_X$ is a Nash CI curve. Its equations in $\mathcal{M}_\mathcal{C}= \P^1\times\P^3$ are 
\[\begin{split}
 F_1 &= 6  \tau_{11}-5  \tau_{12}-2  \tau_{21}+7  \tau_{22},\\ 
 F_2 &= 
(5 \sigma^{(1)}_1\!\! -\!2 \sigma^{(1)}_2)  \tau_{11}  \tau_{21}\!-\!(\sigma^{(1)}_1 \!\!+\!4 \sigma^{(1)}_2 ) \tau_{12}  \tau_{21}\!+\!(4 \sigma^{(1)}_1\!\!+\!9 \sigma^{(1)}_2 ) \tau_{11}  \tau_{22}\!+\!(7 \sigma^{(1)}_2\!\!-\!2 \sigma^{(1)}_1  )\tau_{12}  \tau_{22},\\
\!\!F_3 &= (8 \sigma^{(1)}_1\!\! -\!2 \sigma^{(1)}_2 ) \tau_{11}  \tau_{12}\!+\!(8 \sigma^{(1)}_1 \!\!+\!12\sigma^{(1)}_2  )\tau_{12}  \tau_{21}\!+\!(8 \sigma^{(1)}_1\!\! -\!7 \sigma^{(1)}_2 ) \tau_{11}  \tau_{22}\!+\!(8 \sigma^{(1)}_1\!\!  +\!7 \sigma^{(1)}_2 ) \tau_{21}  \tau_{22}.
\end{split}
\]
The Spohn CI curve $C_X$ is contained in the four-dimensional irreducible Spohn variety of degree 4 whose ideal is generated by the determinants of three $2\times 2$ matrices $M_i$ depicted in (\ref{eq: Spohn matrices}). Moreover, $C_X$ contains the two totally mixed Nash equilibria of $X$ computed in \cite[Section 6.2]{CBMS}.
\end{example}

\section{Degree and genus of the Nash CI curve} \label{Section 3}

This section is devoted to the computation of the degree and the genus of the Nash CI curve~$C_X$. In particular, we prove that the Nash CI curve $C_X$ is connected. 
\para

\begin{lemma}\label{lem: degree of almost Nash curve}
For generic payoff tables, the degree of  $C_X$ is the coefficient of the monomial $x_1\cdots x_{n-2}x_{n-1}^3$ in the polynomial \begin{equation}\label{eq:poly deg}
    \displaystyle\prod_{i=1}^{n-2}\left( 
\sum_{k\neq i}^{n-1}x_k
\right)\left(2x_{n-1}+\sum_{k=1}^{n-2}x_k\right)^2\left(
\sum_{k=1}^{n-1}x_k\right).
\end{equation}
\end{lemma}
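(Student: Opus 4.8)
The plan is to read off the degree of $C_X$ as an intersection number in the Chow ring of $\Mc = \left(\P^1\right)^{n-2}\times\P^3$. Recall that this ring is
\[
A^*(\Mc)\;\cong\;\Z[x_1,\ldots,x_{n-2},x_{n-1}]\big/\bigl(x_1^2,\ldots,x_{n-2}^2,\,x_{n-1}^4\bigr),
\]
where for $i\le n-2$ the class $x_i$ is the pullback of the hyperplane class of the $i$th $\P^1$ factor and $x_{n-1}$ is the pullback of the hyperplane class of the $\P^3$ factor. The top graded piece sits in degree $(n-2)+3=n+1$ and is freely generated by the point class $x_1\cdots x_{n-2}x_{n-1}^3$.

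First I would translate the multi-degrees recorded just before Proposition~\ref{prop:dim 1} into divisor classes. A divisor of multi-degree $(a_1,\ldots,a_{n-1})$ has class $\sum_{k=1}^{n-1}a_k x_k$, so
\[
[F_i]=\sum_{k\neq i}^{n-1}x_k\quad(1\le i\le n-2),\qquad [F_{n-1}]=[F_n]=2x_{n-1}+\sum_{k=1}^{n-2}x_k.
\]
By Corollary~\ref{co:gen com int}, for generic payoff tables $C_X$ is a complete intersection of $\mathbb{V}(F_1),\ldots,\mathbb{V}(F_n)$, so its class is the product $[C_X]=\prod_{i=1}^{n}[F_i]\in A^{n}(\Mc)$, with no excess-intersection corrections. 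The Segre embedding $\Mc\hookrightarrow\P^{2^n-1}$ is given by the very ample class $H=\sum_{k=1}^{n-1}x_k$, and the degree of the curve in this embedding is the intersection number $[C_X]\cdot H\in A^{n+1}(\Mc)\cong\Z$. Multiplying everything out, this number is precisely the image of the polynomial \eqref{eq:poly deg} in $A^*(\Mc)$.

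The final step, and the only one requiring any care, is to identify this intersection number with a coefficient of the \emph{unreduced} polynomial \eqref{eq:poly deg}. Since that polynomial is homogeneous of degree $(n-2)+2+1=n+1$, every monomial in its expansion has total degree $n+1$. Reducing modulo $x_i^2=0$ and $x_{n-1}^4=0$ annihilates all monomials except those with each $x_i$-exponent at most $1$ (for $i\le n-2$) and $x_{n-1}$-exponent at most $3$; among degree-$(n+1)$ monomials the unique survivor is $x_1\cdots x_{n-2}x_{n-1}^3$. Hence the image of \eqref{eq:poly deg} equals $\bigl(\text{coefficient of }x_1\cdots x_{n-2}x_{n-1}^3\bigr)\cdot x_1\cdots x_{n-2}x_{n-1}^3$, and comparing with $[C_X]\cdot H$ yields the stated formula for $\deg C_X$. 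I do not expect a genuine obstacle here: the computation is pure intersection theory, and the only subtlety is this homogeneity bookkeeping guaranteeing that no other monomial of \eqref{eq:poly deg} contributes to the point class.
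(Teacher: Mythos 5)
Your proposal is correct and follows essentially the same route as the paper: both compute the class $[C_X]=\prod_i[\mathbb{V}(F_i)]$ in the Chow ring $\Z[x_1,\ldots,x_{n-1}]/\langle x_1^2,\ldots,x_{n-2}^2,x_{n-1}^4\rangle$ of $\Mc$ via the multi-degrees of the $F_i$, multiply by the pullback $x_1+\cdots+x_{n-1}$ of the hyperplane class under the Segre embedding, and read off the coefficient of the point class $x_1\cdots x_{n-2}x_{n-1}^3$. Your explicit homogeneity argument for why no other monomial of \eqref{eq:poly deg} contributes is a detail the paper leaves implicit, but it is the same computation.
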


\begin{proof}
By Corollary~\ref{co:gen com int},  $C_X$ is the complete intersection of $n$ divisors $D_1,\ldots,D_n$ of $\Mc$, where $D_i:=\mathbb{V}(F_i)$. We compute the degree by multiplying the classes of these divisors with the class of $H\cap\Mc$ in the Chow ring of $\Mc$ where $H$ is a generic hyperplane of $\P^{2^n-1}$. 
Using K\"unneth's formula (see \cite[Theorem 2.10]{EH}), we obtain the Chow rings of $\Mc$ as follows: 
\[
A_{\bullet}(\Mc)\simeq\left(\displaystyle\bigotimes_{i=1}^{n-2}A_{\bullet}(\P^1)
\right)\otimes A_{\bullet}(\P^3)\simeq \Z[x_1,\ldots,x_{n-1}]/\langle x_1^2,\ldots,x_{n-2}^2,x_{n-1}^4\rangle.
\]
\noindent The classes $\left[D_i\right]$ in $A_{\bullet}(\Mc)$ correspond to the first Chern classes of the line bundles $\mathcal{O}_{\Mc}(D_i)$. 
Let $F$ be a multi-homogeneous polynomial in $\C[\sigma_{1}^{(1)},\ldots,\tau_{2,2}]$, and let $D$ be the divisor $D=\mathbb{V}(F)$ in $\Mc$. For $i\leq n-2$, we denote the degree of $F$ with respect to the variables $\sigma^{(i)}_1,\sigma^{(i)}_2$ by $d_i$ (resp. for $d_{n-1}$ and $\tau_{j_1j_2}$). Then, the line bundle associated to $D$ is 
\[
\mathcal{O}_{\Mc}(D)=\pi_1^*(\mathcal{O}_{\P^1}(d_1))\otimes\cdots\otimes\pi_{n-2}^*(\mathcal{O}_{\P^1}(d_{n-2}))\otimes \pi_{n-1}^*(\mathcal{O}_{\P^3}(d_{n-1})),
\]
where $\pi_i$ is the projection from $\Mc$ to the corresponding factor of the product.
We denote this line bundle by $\mathcal{O}_{\Mc}(d_1,\ldots,d_{n-1})$. In particular, we obtain that 
\[
\mathcal{O}_{\Mc}(D_i)=\left\{
\begin{array}{cl}
\mathcal{O}_{\Mc}(1,\ldots,1,\underset{(i)}{0},1,\ldots,1)& \text{ for } i\leq n-2,\\
& \\
\mathcal{O}_{\Mc}(1,\ldots,1,2)& \text{ for } i=n-1,n.
\end{array}
\right.
\]
Furthermore, since the first Chern class of $\mathcal{O}_{\Mc}(d_1,\ldots,d_{n-1})$ is
$d_1x_1+\cdots+d_{n-1}x_{n-1}$, we obtain the following:
\[
\left[D_i\right]=
\mathcal{O}_{\Mc}(D_i)=\left\{
\begin{array}{cl}
\displaystyle\sum_{k\neq i}^{n-1} x_k& \text{ for } i\leq n-2,\\

\displaystyle2x_{n-1}+\sum_{k=1}^{n-2} x_k& \text{ for } i=n-1,n.
\end{array}
\right.
\]
See \cite[Chapters 1 and 2]{EH} for more details on these computations.
Therefore, we deduce that 
\[
\left[D_1\right]\cdots\left[D_n\right]=\displaystyle\prod_{i=1}^{n-2}x_k\left( 
\sum_{k\neq i}^{n-1}x_k
\right)\left(2x_{n-1}+\sum_{k=1}^{n-2}\right)^2.
\]

Finally, using that for a generic hyperplane  $H$ it holds that $\left[H\cap \Mc \right]=x_1+\cdots+x_{n-1}$, we get that $\left[D_1\right]\cdots\left[D_n\right]\left[H\cap \Mc \right]$ corresponds to the class of the polynomial \eqref{eq:poly deg}
in $A_{\bullet}(\Mc)$ and hence the statement follows. 
\end{proof}

Now, we deal with the computation of the arithmetic genus of $C_X$. For this purpose, we aim to obtain the Euler characteristic of $\mathcal{O}_{C_X}$. By Corollary~\ref{co:gen com int}, we get the following exact sequence (Koszul complex):

\begin{equation}\label{eq:long exact seq}
\begin{array}{c}
\displaystyle 0  \rightarrow \mathcal{O}_{\Mc}\left(-\overset{n}{\underset{k=1}{\sum}} D_{i}\right) \rightarrow\!\! \bigoplus_{i_1<\cdots<i_{n-1}}^n\mathcal{O}_{\Mc}\left( -\overset{n-1}{\underset{k=1}{\sum}} D_{i_k} \right)\rightarrow\cdots\rightarrow

\\ 
\displaystyle \rightarrow \bigoplus_{i_1<i_2}^n\mathcal{O}_{\Mc}\left(-D_{i_1}-D_{i_2}\right)

 \rightarrow 
\bigoplus_{i}^n\mathcal{O}_{\Mc}\left(
-D_{i}
\right) \rightarrow 
\O_{\Mc} \rightarrow
\mathcal{O}_{C_X}\rightarrow 0.
\end{array}
\end{equation}

\para 

\noindent We define $\chi_k$ as the Euler characteristic of the $(k+2)$th term from the right of the previous exact sequence, i.e.\
\begin{equation}\label{eq:chi_k}
\!\!\chi_k:= \chi\left(
\bigoplus_{i_1<\cdots<i_{k}}^n\mathcal{O}_{\Mc}\left(-
D_{i_1}-\cdots -D_{i_k}
\right)
\right)=\!\!\sum_{i_1<\cdots< i_k}^n\!\! \chi\left(\mathcal{O}_{\Mc}\left(-
D_{i_1}-\cdots -D_{i_k}
\right)
\right).
\end{equation}
Then, using  \eqref{eq:long exact seq} we obtain that 
\begin{equation}\label{eq: alt sum euler}
\chi(\O_{C_x})=\chi(\O_{\Mc})+\displaystyle\sum_{k=1}^n (-1)^k\chi_k.
\end{equation}

\noindent By K\"unneth formula, and since $\chi(\O_{\P^n})=1$, we deduce that  $\chi(\O_{\Mc})=1$. Thus, it only remains to compute $\chi_k$. For the following lemma, our convention is that $\binom{a}{b} = 0$, if $a<b$.

\para 

\begin{lemma}\label{lemma:chi k}
For $3\leq k\leq n$, 
\[
\chi_k = \binom{n-2}{k}\chi(\O_{k,1})+ 2\binom{n-2}{k-1}\chi(\O_{k,2})+ \binom{n-2}{k-2}\chi(\O_{k,3}),
\]
where 
\begin{itemize}
    \item $\chi(\O_{k,1})=(-1)^{n+1}\binom{k-1}{3}(k-2)^k(k)^{n-2-k}$.
    \item $\chi(\O_{k,2})=(-1)^{n+1}\binom{k}{3}(k-2)^k(k-1)^{n-1-k}$.
    \item $\chi(\O_{k,3})=(-1)^{n+1}\binom{k+1}{3}(k-2)^k(k-2)^{n-k}$.
\end{itemize}
\noindent Moreover, we have that $\chi_1 = 0$ and $\chi_2 = (-1)^{n+1}$.
\end{lemma}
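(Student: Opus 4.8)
The plan is to exploit the product structure $\Mc=(\P^1)^{n-2}\times\P^3$ together with the multiplicativity of the Euler characteristic of line bundles on a product of projective spaces (the same K\"unneth input already used to get $\chi(\O_{\Mc})=1$). First I would record the two elementary facts $\chi(\O_{\P^1}(d))=d+1$ and $\chi(\O_{\P^3}(d))=\binom{d+3}{3}$, so that the Euler characteristic of any line bundle $\O_{\Mc}(d_1,\dots,d_{n-1})$ equals $\prod_{i=1}^{n-2}(d_i+1)\cdot\binom{d_{n-1}+3}{3}$; with the stated convention $\binom{a}{b}=0$ for $a<b$ this also handles all the negative twists that occur.

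Next I would organize the sum in \eqref{eq:chi_k} over $k$-subsets of $\{1,\dots,n\}$ according to how many of the two special divisors $D_{n-1},D_n$ (those of class $2x_{n-1}+\sum_{i\le n-2}x_i$) the subset contains; call this number $b\in\{0,1,2\}$ and put $a=k-b$ for the number of ordinary divisors $D_i$ with $i\le n-2$. The central bookkeeping step is to read off the multidegree of $-\sum_i D_i$ for a subset whose ordinary indices form $A\subseteq\{1,\dots,n-2\}$ with $|A|=a$: among the $\P^1$ factors, the coordinate $x_j$ with $j\in A$ gets degree $-(k-1)$ and the remaining $n-2-a$ coordinates get $-k$, while the $\P^3$ factor gets $-(a+2b)$. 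Because $D_{n-1}$ and $D_n$ share the same class and the $\P^1$ factors are interchangeable, this multidegree, hence $\chi$, depends only on $b$; this is exactly what lets the common value $\chi(\O_{k,b+1})$ be pulled out of the sum. Counting the subsets with a prescribed $b$ gives $\binom{2}{b}\binom{n-2}{k-b}$, reproducing the three coefficients $\binom{n-2}{k}$, $2\binom{n-2}{k-1}$, $\binom{n-2}{k-2}$.

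With the multidegree known, each common value is the explicit product $\chi(\O_{k,b+1})=(2-k)^{a}(1-k)^{\,n-2-a}\binom{3-(a+2b)}{3}$. I would then simplify by pulling signs out of $(2-k)$ and $(1-k)$ to obtain $(k-2)$ and $(k-1)$, and rewriting the $\P^3$ binomial via the reflection identities $\binom{3-k}{3}=-\binom{k-1}{3}$, $\binom{2-k}{3}=-\binom{k}{3}$, $\binom{1-k}{3}=-\binom{k+1}{3}$, so that the $\P^3$ contribution becomes one of $\binom{k-1}{3},\binom{k}{3},\binom{k+1}{3}$ and all signs combine into a single $(-1)^{n+1}$. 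This sign-and-exponent bookkeeping across the three cases $b=0,1,2$ is the step I expect to be the main obstacle: one must track the exponents of $(k-2)$ and $(k-1)$ coming from $a=k-b$ and $n-2-a$ correctly, and invoke the convention $\binom{a}{b}=0$ precisely where the small $\P^3$ degrees force vanishing.

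Finally, $k=1,2$ lie outside the range $3\le k\le n$ because the nonvanishing used above fails for small $k$: the factors $\chi(\O_{\P^1}(-1))=0$ and $\chi(\O_{\P^3}(-1))=0$ then annihilate nearly every term, so I would dispatch these directly. For $k=1$ each of the $n$ one-divisor bundles acquires such a vanishing factor, giving $\chi_1=0$. For $k=2$ any subset containing an ordinary divisor has an $A$-coordinate of degree $-1$ and hence contributes $0$; only the single subset $\{n-1,n\}$ (with $a=0$) survives, and its Euler characteristic is $(1-2)^{\,n-2}\binom{3-4}{3}=(-1)^{n-2}\binom{-1}{3}=(-1)^{n+1}$, so $\chi_2=(-1)^{n+1}$.
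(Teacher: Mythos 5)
Your argument is correct, and structurally it is the same as the paper's: the paper also splits the sum \eqref{eq:chi_k} according to how many of the chosen indices lie in $\{n-1,n\}$, obtains exactly your three multidegrees (chosen $\P^1$ factors in degree $-(k-1)$, unchosen ones in degree $-k$, the $\P^3$ factor in degree $-k$, $-(k+1)$, $-(k+2)$ respectively) with the same counts $\binom{n-2}{k}$, $2\binom{n-2}{k-1}$, $\binom{n-2}{k-2}$, and then applies K\"unneth. The only technical difference is that the paper identifies the unique non-vanishing cohomology group (the $(n+1)$st) via $h^m\left(\P^m,\O_{\P^m}(-d)\right)=\binom{d-1}{m}$ and sets $\chi=(-1)^{n+1}h^{n+1}$, while you compute $\chi$ directly from multiplicativity and $\chi\left(\O_{\P^m}(d)\right)=\binom{d+m}{m}$; these are equivalent. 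One small internal inconsistency in your write-up: the convention $\binom{a}{b}=0$ for $a<b$ does \emph{not} by itself handle all negative twists (it fails for $\P^3$-degree $\leq -4$); what you actually need, and do use, is the polynomial extension of the binomial coefficient, i.e.\ your reflection identities.

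The substantive point is that the formulas you derive, namely $\chi(\O_{k,1})=(-1)^{n+1}\binom{k-1}{3}(k-2)^{k}(k-1)^{n-2-k}$, $\chi(\O_{k,2})=(-1)^{n+1}\binom{k}{3}(k-2)^{k-1}(k-1)^{n-1-k}$, and $\chi(\O_{k,3})=(-1)^{n+1}\binom{k+1}{3}(k-2)^{k-2}(k-1)^{n-k}$, do \emph{not} agree with the ones printed in the Lemma --- and yours are the correct ones. The exponent of $(k-2)$ must equal the number $k-b$ of $\P^1$ factors in degree $-(k-1)$, so it drops by one with each special divisor added, and the remaining $\P^1$ factors contribute base $k-1$ (not $k$, and not $k-2$). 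The printed formulas are misprints: plugging them into Corollary~\ref{cor: genus of the generic curve} for $n=4$ gives $\chi_3=-10$, $\chi_4=-160$, hence $p_a=1-10+160=151$, contradicting the paper's own Table~\ref{table-2} (genus $23$); your formulas give $\chi_3=-18$, $\chi_4=-40$, $p_a=23$, and likewise reproduce $175$ and $1469$ for $n=5,6$. (For $n=3$ both versions give genus $3$, since $k-2=1$ and $n-k=0$ mask the discrepancy, which is why the paper's 3-player check did not catch it.) So your derivation should be read as a correction of the stated formulas, not as a failure to prove them.
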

\begin{proof}
The idea  is to use K\"unneth formula as above. First of all, we observe that only the following three types of line bundles can appear in the sum of equation \eqref{eq:chi_k}:
\begin{itemize}
    \item If $i_1,\ldots,i_k\leq n-2$, the line bundle of the corresponding term of \eqref{eq:chi_k} is of the form 
    \[
    \O_{\Mc}(-k,\ldots,\underset{(i_1)}{-k+1},\ldots,\underset{(i_k)}{-k+1},\ldots,-k).
    \]
    Moreover, in \eqref{eq:chi_k} these terms appear $\binom{n-2}{k}$ times.
    \item If $i_1,\ldots,i_{k-1}\leq n-2$ and $i_k>n-2$, the line bundle of the corresponding term of \eqref{eq:chi_k} is of the form 
    \[
    \O_{\Mc}(-k,\ldots,\underset{(i_1)}{-k+1},\ldots,\underset{(i_{k-1})}{-k+1},\ldots,-k-1).
    \]
    Moreover, in \eqref{eq:chi_k} these terms appear $2\binom{n-2}{k-1}$ times.
    \item If $i_1,\ldots,i_{k-2}\leq n-2$ and $i_{k-1}>n-2$, the line bundle of the corresponding term of \eqref{eq:chi_k} is of the form 
    \[
    \O_{\Mc}(-k,\ldots,\underset{(i_1)}{-k+1},\ldots,\underset{(i_{k-2})}{-k+1},\ldots,-k-2).
    \]
    Moreover, in  \eqref{eq:chi_k} these terms appear $\binom{n-2}{k-2}$ times.
\end{itemize}

To deal with each of these three cases, we again use K\"unneth formula. In our setting, this formula states that for $d_1,\ldots,d_{n-1}\in\Z$, 
\[
h^k\left(\Mc,\O_{\Mc}(d_1,\ldots,d_{n-1})\right) = \displaystyle\sum_{k_1+\cdots+k_{n-1}=k}h^{k_{n-1}}\left(\P^3,\O_{\P^3}(d_{n-1})\right)\prod_{i=1}^{n-2}h^{k_i}\left(\P^1,\O_{\P^1}(d_i)\right).
\]
In particular, using  that all the first $n-2$ factors of $\Mc$ are isomorphic, we get that the Euler characteristic of each of the line bundles of the above cases does not depend on the choice of the $i_l$ that is lower or equal than $n-2$. Denoting the line bundles of the three cases by $\O_{k,1}$, $\O_{k,2}$, and $\O_{k,3}$ respectively, we get that 
\begin{equation}\label{eq:sum chi k}
\chi_k = \binom{n-2}{k}\chi(\O_{k,1})+ 2\binom{n-2}{k-1}\chi(\O_{k,2})+ \binom{n-2}{k-2}\chi(\O_{k,3}).
\end{equation}

To compute each of the terms of the above expression  we use that for $d>0$,
\[
h^k\left(\P^n,\O_{\P^n}(-d)\right)=\left\{
\begin{array}{cl}
0 & \text{ if } k\neq n \text{ or } d\leq n,\\
\dim\left(\frac{1}{x_0\cdots x_n}\C[x_0^{-1},\cdots,x_n^{-1}] \right)_d & \text{ if } k=n \text{ and } d> n.
\end{array}
\right.
\]
Note that $\dim\left(\frac{1}{x_0\cdots x_n}\C[x_0^{-1},\cdots,x_n^{-1}] \right)_d= \binom{d-1}{n}$. In particular, for $d_1,\ldots,d_{n-1}>0$ such that either $d_i\leq 1$ for some $i\leq n-2$ or  $d_i\leq 3$ for $i \geq n-1$, we obtain that 
$$h^k\left(\Mc,\O_{\Mc}(-d_1,\ldots,-d_{n-1})\right)=0\,\,\, \forall k.$$ On the contrary, if $d_i>1$ for every $i\leq n-2$ and $d_{n-1},d_n> 3$, we get that 
\begin{equation}\label{eq:coho lin mc}
h^k\left(\Mc,\O_{\Mc}(-d_1,\ldots,-d_{n-1})\right)=\left\{
\begin{array}{cl}
0 & \text{ if } k\neq n+1,  \\
\displaystyle
\binom{d_{n-1}-1}{3}\prod_{i=1}^{n-2}(d_i-1)& \text{ if } k=n+1. 
\end{array}
\right.
\end{equation}
Now, note that for all the line bundles appearing in $\chi_1$ it holds that  $d_{n-1}\leq 3$. Thus, we can conclude that $\chi_1 = 0$. Analogously, the only line bundle appearing in the expression of $\chi_2$ with a non-zero cohomology group is $\O(-D_{n-1}-D_n)=\O(-2,\ldots,-2,-4)$. Using equation \eqref{eq:coho lin mc} one deduces that the only non-zero cohomology group of this line bundle is the $(n+1)$th cohomology group and its dimension is $1$. Hence, we conclude that $\chi_2=(-1)^{n+1}$. A similar argument shows that, for $k\geq 3$, the only non-zero cohomology group of $\O_{k,1}$, $\O_{k,2}$, and $\O_{k,3}$ is the $(n+1)$th cohomology group and we deduce the following formulas:
\begin{itemize}
    \item $\chi(\O_{k,1})=(-1)^{n+1}\binom{k-1}{3}(k-2)^k k^{n-2-k}$.
    \item $\chi(\O_{k,2})=(-1)^{n+1}\binom{k}{3}(k-2)^k(k-1)^{n-1-k}$.
    \item $\chi(\O_{k,3})=(-1)^{n+1}\binom{k+1}{3}(k-2)^k(k-2)^{n-k}$.
\end{itemize}
The proof of the lemma follows from these expressions and equation \eqref{eq:sum chi k}.
\end{proof}

\noindent Finally, using this lemma and equation \eqref{eq: alt sum euler}, the following corollary can be derived:

\para 

\begin{corollary}
    For generic payoff games, the Euler characteristic of  $C_X$ is 
    \[
    \chi(\O_{C_X})= 1+(-1)^{n+1}+
    \displaystyle\sum_{k=3}^n (-1)^k\chi_k
    \]
    where $\chi_k$ is as in Lemma~\ref{lemma:chi k}.
\end{corollary}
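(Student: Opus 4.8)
The plan is to obtain $\chi(\O_{C_X})$ directly from the alternating-sum identity \eqref{eq: alt sum euler}, which itself follows from the Koszul resolution \eqref{eq:long exact seq}, by substituting the three special values already computed. Since the Euler characteristic is additive on short exact sequences, splitting \eqref{eq:long exact seq} into short exact pieces yields $\sum_i (-1)^i \chi(A_i) = 0$ over the whole complex, with $\O_{C_X}$ the rightmost term; solving for $\chi(\O_{C_X})$ reproduces exactly
\[
\chi(\O_{C_X}) = \chi(\O_{\Mc}) + \sum_{k=1}^n (-1)^k \chi_k.
\]
This reduces the corollary to inserting known quantities.

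First I would substitute $\chi(\O_{\Mc}) = 1$, established via K\"unneth together with $\chi(\O_{\P^m}) = 1$. Next I would peel off the two lowest summands of the tail: by Lemma~\ref{lemma:chi k} we have $\chi_1 = 0$, so the $k=1$ contribution $(-1)^1\chi_1$ vanishes, and $\chi_2 = (-1)^{n+1}$, so the $k=2$ contribution is $(-1)^2\chi_2 = (-1)^{n+1}$. The remaining summands $\sum_{k=3}^n (-1)^k \chi_k$ are left in closed form, each $\chi_k$ being supplied by Lemma~\ref{lemma:chi k}. Assembling these pieces gives $\chi(\O_{C_X}) = 1 + (-1)^{n+1} + \sum_{k=3}^n (-1)^k \chi_k$, which is the assertion.

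The statement carries no genuine obstacle, as all the delicate cohomological bookkeeping has already been discharged in Lemma~\ref{lemma:chi k}; the corollary is a one-line substitution. The single point meriting care is the genericity hypothesis: it is invoked through Corollary~\ref{co:gen com int} to guarantee that $C_X$ is a complete intersection of the $n$ divisors $D_i$, which is precisely what makes the Koszul complex \eqref{eq:long exact seq} a resolution of $\O_{C_X}$ and hence validates \eqref{eq: alt sum euler}. Once genericity secures exactness, the displayed formula follows immediately.
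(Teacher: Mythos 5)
Your proposal is correct and matches the paper's own (implicit) argument exactly: the paper also derives this corollary by substituting $\chi(\O_{\Mc})=1$, $\chi_1=0$, and $\chi_2=(-1)^{n+1}$ into the alternating-sum identity \eqref{eq: alt sum euler} obtained from the Koszul resolution \eqref{eq:long exact seq}, which is valid by the complete-intersection property of Corollary~\ref{co:gen com int}. Nothing is missing.
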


\para 

Now that the Euler characteristic has been computed, we deal with the arithmetic genus. For this purpose, we first need to ensure that the Nash CI curve is connected.

\para 

\begin{lemma}\label{lem: connected}
For generic payoff tables, $h^0(C_X,\O_{C_X})=1$. In particular, $C_X$ is connected.
\end{lemma}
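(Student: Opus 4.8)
The plan is to read off $h^0(C_X,\O_{C_X})$ from the Koszul resolution \eqref{eq:long exact seq}, combined with the cohomology dichotomy already recorded in \eqref{eq:coho lin mc}. Write the resolution as $0\to F_n\to\cdots\to F_1\to F_0=\O_{\Mc}\to\O_{C_X}\to 0$, where $F_k=\bigoplus_{i_1<\cdots<i_k}\O_{\Mc}(-D_{i_1}-\cdots-D_{i_k})$. The key point is that \emph{every} summand of \emph{every} $F_k$ has cohomology either identically zero or concentrated in the single degree $n+1$: by \eqref{eq:coho lin mc}, a twist $\O_{\Mc}(-d_1,\ldots,-d_{n-1})$ has nonzero cohomology only when $d_i\geq 2$ for all $i\leq n-2$ and $d_{n-1}\geq 4$, and in that case only $H^{n+1}$ survives. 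Hence $H^j(\Mc,F_k)=0$ for all $k$ and all $j\neq n+1$.

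I would then split the resolution into short exact sequences. Let $G_0=\ker(F_0\to\O_{C_X})$ be the ideal sheaf of $C_X$, and for $1\leq k\leq n-1$ put $G_k=\ker(F_k\to F_{k-1})=\operatorname{im}(F_{k+1}\to F_k)$, so that $G_{n-1}\cong F_n$. This produces $0\to G_0\to\O_{\Mc}\to\O_{C_X}\to 0$ together with $0\to G_k\to F_k\to G_{k-1}\to 0$ for $1\leq k\leq n-1$. The K\"unneth formula gives $H^0(\Mc,\O_{\Mc})=\C$ and $H^1(\Mc,\O_{\Mc})=0$, while $H^0(\Mc,G_0)=0$ since $C_X$ is a nonempty curve (Proposition~\ref{prop:dim 1}) and the only global sections of $\O_{\Mc}$ are constants. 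The long exact sequence of the first short exact sequence then yields $h^0(C_X,\O_{C_X})=1+h^1(\Mc,G_0)$, so it suffices to prove $H^1(\Mc,G_0)=0$.

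To do so, I would climb the tower of sequences $0\to G_k\to F_k\to G_{k-1}\to 0$. Because the only possibly nonzero cohomology of $F_k$ lives in degree $n+1$, we have $H^k(\Mc,F_k)=H^{k+1}(\Mc,F_k)=0$ for every $1\leq k\leq n-1$ (indeed $k+1\leq n<n+1$). The associated connecting homomorphisms are therefore isomorphisms, giving
\[
H^1(\Mc,G_0)\;\cong\;H^2(\Mc,G_1)\;\cong\;\cdots\;\cong\;H^n(\Mc,G_{n-1}).
\]
Finally $G_{n-1}\cong F_n=\O_{\Mc}(-D_1-\cdots-D_n)$, which is the line bundle $\O_{\Mc}(-(n-1),\ldots,-(n-1),-(n+2))$; by the computation above its cohomology again sits in degree $n+1$, so $H^n(\Mc,F_n)=0$. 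Unwinding the chain gives $H^1(\Mc,G_0)=0$, hence $h^0(C_X,\O_{C_X})=1$ and $C_X$ is connected.

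The argument is essentially formal once \eqref{eq:coho lin mc} is available; the only delicate point, and the step I expect to require the most care, is the bookkeeping ensuring that the running cohomological degrees $k$ and $k+1$ in the tower always stay strictly below the unique nonvanishing degree $n+1$, and in particular that the terminal term $G_{n-1}\cong F_n$ has its cohomology in degree $n+1$ rather than $n$. This amounts to tracking the multidegrees of the Koszul twists $-D_{i_1}-\cdots-D_{i_k}$ and applying the vanishing \eqref{eq:coho lin mc} at each stage, exactly as in the proof of Lemma~\ref{lemma:chi k}.
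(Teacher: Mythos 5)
Your proof is correct and follows essentially the same route as the paper's: split the Koszul resolution into short exact sequences, use that every Koszul twist has cohomology concentrated in degree $n+1$ (so all $H^j(\Mc,F_k)$ with $j\le n$ vanish) to get the dimension-shifting chain $H^1(\Mc,G_0)\cong H^2(\Mc,G_1)\cong\cdots\cong H^n(\Mc,F_n)=0$, and conclude from the long exact sequence of the ideal-sheaf sequence. Your two small additions—explicitly computing $F_n=\O_{\Mc}(-(n-1),\ldots,-(n-1),-(n+2))$ to justify the final vanishing, and pinning down $h^0(C_X,\O_{C_X})=1+h^1(\Mc,G_0)$ exactly (using $H^0(\Mc,G_0)=0$ from nonemptiness) rather than only a surjection—are refinements of, not departures from, the paper's argument.
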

\begin{proof}
The idea is to split the exact sequence \eqref{eq:long exact seq} in short exact sequences and apply the long exact sequence of cohomology on each of them. To do so, let $\F_n$ be the first sheaf (starting from the left) appearing in the exact sequence \eqref{eq:long exact seq}. Similarly, let $\F_{n-1}$ be the second sheaf of the exact sequence, and so on. We denote the morphisms from $\F_i$ to $\F_{i-1}$ by $\phi_i$. Then, \eqref{eq:long exact seq} can be written as
\begin{equation}\label{eq:long exact seq 2}
\displaystyle
0\longrightarrow \F_n\overset{\phi_n}{\longrightarrow} \F_{n-1}\overset{\phi_{n-1}}{\longrightarrow}\cdots\overset{\phi_3}{\longrightarrow}
\F_{2}
\overset{\phi_2}{\longrightarrow}
\F_1
\overset{\phi_1}{\longrightarrow}
\O_{\Mc}\overset{\phi_0}{\longrightarrow}
\mathcal{O}_{C_X}\longrightarrow 0.
\end{equation}
Let $K_i$ be  the kernel of $\phi_i$. Then, the above exact sequence splits in the following $n$ short exact sequences:
\[
\begin{array}{ll}
 (E_0):    &  0\longrightarrow K_0\longrightarrow \O_{\Mc}\longrightarrow\O_{C_X}\longrightarrow 0,\\
 \noalign{\vspace*{1mm}}(E_1):    &  0\longrightarrow K_1\longrightarrow \F_{1}\longrightarrow K_0\longrightarrow 0,\\
 \,\,\,\,\vdots & \hspace*{2.7cm}\vdots \\
 (E_i):    &  0\longrightarrow K_i\longrightarrow \F_{i}\longrightarrow K_{i-1}\longrightarrow 0,\\
  \,\,\,\,\vdots & \hspace*{2.7cm}\vdots \\
 (E_{n-1}):    &  0\longrightarrow K_{n-1}\longrightarrow \F_{n-1}\longrightarrow K_{n-2}\longrightarrow 0.\\
\end{array}
\]
Now, we consider the long exact sequence in the cohomology of  $(E_0)$:
\[
0\longrightarrow H^0(\Mc,K_0)\longrightarrow H^0(\Mc,\O_{\Mc})\longrightarrow H^0(C_X,\O_{C_X})\longrightarrow H^1(\Mc,K_0)\longrightarrow\cdots.
\]
By the exactness of this sequence, if $h^1(\Mc,K_0)=0$, we would get a surjection 
\[
H^0(\Mc,\O_{\Mc})\longrightarrow H^0(C_X,K_0)\longrightarrow 0.
\]
Since  $h^0(\Mc,\O_{\Mc})=1$, this would imply that $h^0(C_X,\O_{C_X})=1$. Thus, it is enough to check that $h^1(\Mc,K_0)=0$. To do so, we focus on the long exact sequence in cohomology arising from $(E_1)$:
\[
\cdots \longrightarrow H^1(\Mc,\F_1)\longrightarrow H^1(\Mc,K_0)\longrightarrow H^2(\Mc,K_1)\longrightarrow H^2(\Mc,\F_1)\longrightarrow\cdots .
\]
By the computations performed in the proof of Lemma~\ref{lemma:chi k}, we know that for $j\geq 1$ and  $k\leq n$, $h^k(\Mc,\F_j)=0$. Thus, from the above sequence, we conclude that $H^1(\Mc,K_0)\simeq H^2(\Mc,K_1)$. Recursively, we get that $H^i(\Mc,K_{i-1})\simeq H^{i+1}(\Mc,K_i)$ for every $i\leq n-1$.  In particular, by the exactness of \eqref{eq:long exact seq 2},  we have that $K_{n-1} = \F_n$. Hence, we obtain that
\[
H^1(\Mc,K_0)\simeq H^2(\Mc,K_1)\simeq \cdots\simeq H^n(\Mc,K_{n-1})=H^n(\Mc,\F_n).
\]
Now, the proof follows from the vanishing of the cohomology group $H^n(\Mc,\F_n)$.
\end{proof}

\para 

Since we have shown $C_X$ is connected, we may derive the arithmetic genus of the Nash CI curve.

\para

\begin{corollary}\label{cor: genus of the generic curve}
    For generic payoff tables, $C_X$ is a connected curve of arithmetic genus
    \[p_a(C_X)=
    (-1)^{n}+
    \displaystyle\sum_{k=3}^n (-1)^{k+1}\chi_k
    \]
    where $\chi_k$ is as in Lemma~\ref{lemma:chi k}.
\end{corollary}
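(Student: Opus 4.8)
The plan is to derive $p_a(C_X)$ directly from the Euler characteristic computed in the preceding corollary, using the standard relation between arithmetic genus and Euler characteristic for a connected projective curve. Recall that for any projective curve $C$, the arithmetic genus is defined by $p_a(C) = 1 - \chi(\O_C) + (h^0(C,\O_C) - 1)$; more precisely, the clean statement valid here is that $\chi(\O_C) = h^0(C,\O_C) - h^1(C,\O_C)$ and $p_a(C) = h^1(C,\O_C)$. By Lemma~\ref{lem: connected} we have $h^0(C_X,\O_{C_X}) = 1$, so the first step is simply to observe that
\[
p_a(C_X) = h^1(C_X,\O_{C_X}) = h^0(C_X,\O_{C_X}) - \chi(\O_{C_X}) = 1 - \chi(\O_{C_X}).
\]
This reduces the entire corollary to substituting the formula for $\chi(\O_{C_X})$ established above.

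Second, I would plug in the expression from the Euler-characteristic corollary, namely
\[
\chi(\O_{C_X}) = 1 + (-1)^{n+1} + \sum_{k=3}^n (-1)^k \chi_k,
\]
into the relation $p_a(C_X) = 1 - \chi(\O_{C_X})$. The constant $1$ cancels, leaving
\[
p_a(C_X) = -(-1)^{n+1} - \sum_{k=3}^n (-1)^k \chi_k = (-1)^{n} + \sum_{k=3}^n (-1)^{k+1}\chi_k,
\]
which is exactly the claimed formula. Here I use $-(-1)^{n+1} = (-1)^{n+2} = (-1)^n$ and $-(-1)^k = (-1)^{k+1}$ to rewrite each summand.

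The only genuinely substantive inputs are the two preceding results, both of which I am entitled to assume: Lemma~\ref{lem: connected}, which pins down $h^0 = 1$ and hence licenses the identification $p_a = h^1 = 1 - \chi$, and the Euler-characteristic corollary supplying $\chi(\O_{C_X})$. Thus the proof is a short bookkeeping argument, and I do not anticipate any real obstacle. The one point requiring a moment's care is the justification that $p_a(C_X) = 1 - \chi(\O_{C_X})$ rather than some shifted variant: this is precisely where connectedness enters, since for a disconnected curve one would have $h^0 > 1$ and the relation $p_a = 1 - \chi$ would fail. I would state explicitly that the definition $p_a(C_X) = 1 - \chi(\O_{C_X})$ (equivalently $p_a = h^1(C_X,\O_{C_X})$ once $h^0 = 1$) is being used, citing Lemma~\ref{lem: connected} for connectedness, and then the computation above completes the proof.
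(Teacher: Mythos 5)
Your proposal is correct and matches the paper's (implicit) argument: the paper derives the corollary exactly by combining $h^0(C_X,\O_{C_X})=1$ from Lemma~\ref{lem: connected} with the Euler characteristic formula $\chi(\O_{C_X})=1+(-1)^{n+1}+\sum_{k=3}^n(-1)^k\chi_k$ and the relation $p_a(C_X)=1-\chi(\O_{C_X})$. The sign bookkeeping and the role of connectedness in identifying $p_a$ with $h^1(C_X,\O_{C_X})$ are handled just as the paper intends.
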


\para

\begin{example}
Consider the 3-player game from Example~\ref{ex: generic 3-player game}. By Lemma~\ref{lem: degree of almost Nash curve}, we obtain the degree of $C_X$ is $8$ which is the coefficient of the monomial $x_1 x_2^3$ in the polynomial
$$x_2 \left( 2 x_2 + x_1 \right)^2 (x_1 + x_2).$$
By Corollary \ref{cor: genus of the generic curve} and Lemma \ref{lemma:chi k}, the genus of $C_X$ is 
\[
p_a(C_X) = (-1)^3+\chi_3=-1+4 =3.
\]
Note that this computation matches with the {\tt Macaulay2} computation in Example \ref{eq: 3-player generic}.
\end{example}

\para 

Using Corollary \ref{cor: genus of the generic curve} in Table~\ref{table-2}, we can determine effectively the genus of the Nash CI curve for different values of $n$.

\para 

\begin{table}[h]
\centering
\begin{tabular}{|c|c|c|}
 \hline
 $n$ & genus & degree\\
 \hline
 3  &  3  & 8\\
4  &  23 & 30\\
 5 & 175 & 146\\
 6 & 1469 & 880\\
7  & 13491 & 6276\\
 8 & 135859 & 51562\\
 9 & 1494879 & 478670\\
 \hline
 \end{tabular}
 \caption{Genus and degree of the Nash CI curve $C_X$.}
              \label{table-2}
 \end{table}

\para

\begin{remark}
 The genus of $C_X$ can be also computed combinatorially via \cite[Theorem 1]{khovanskii}, and more generally by the motivic arithmetic genus formula in \cite{RHN}. The main idea is to determine the discrete mixed volume of Newton polytopes $\rm Newt(F_i)$ for $i\in[n]$.

Another method for computing the genus of the Nash CI curve is using the adjunction formula. Let $i:C_X\hookrightarrow \Mc$ be the closed immersion of $C_X$ in $\Mc$. Applying the adjunction formula (see \cite[Section 1.4.3]{EH}) we get that 
\begin{equation}\label{eq: adj form}
\omega_{C_X}\!\!\simeq i^{*}\!\left(\omega_{\Mc}\!\otimes\!\O_{\Mc}(D_1)\!\otimes\!\cdots\!\otimes\!\omega_{\Mc}(D_n)\right)\simeq i^{*}\!\left(\O_{\Mc}(n-3,\ldots,n-3,n-2) \right).
\end{equation}
 Since $C_X$ is of complete intersection, it is Gorenstein and hence, \[
\mathrm{deg}(\omega_{C_X}) = 2p_a(C_X)-2.
\]
As a consequence of equation \eqref{eq: adj form}, one can compute the degree of $\omega_{C_X}$ using the Chow ring of $\Mc$ as in Lemma~\ref{lem: degree of almost Nash curve}. Using this method one obtain that $2p_a(C_X)-2$ is equal to the coefficient of the monomial $x_1\cdots x_{n-2}x_{n-1}^3$ in the polynomial 
\[
\displaystyle\prod_{i=1}^{n-2}\left( 
\sum_{k\neq i}^{n-1}x_k
\right)\left(2x_{n-1}+\sum_{k=1}^{n-2}x_k\right)^2\left((n-3)
\sum_{k=1}^{n-2}x_k+(n-2)x_{n-1}\right).
\]
However, our computations in {\tt Macaulay2} show that this method is less effective than the formula in Corollary~\ref{cor: genus of the generic curve}.
\end{remark}

\section{Universality}\label{Sec 4}

This section is devoted to studying the universality of  Spohn CI varieties for one-edge undirected graphical models. 
So far, we have been working with generic payoff tables and we have shown that, in this setting, $C_X$ is a curve. Now, we abandon this assumption so that $C_X$ can have an arbitrary positive dimension. We explore two different notions of universality.

\para 
The intersection of $\V$ and  the Segre variety $\left(\P^1\right)^n$ in the probability simplex is the set of totally mixed Nash equilibria. On the other hand, $\V \cap \left(\P^1\right)^n$ is the intersection of $n$ divisors of the Segre variety with multi-degrees $(0,1,\ldots,1),\ldots,(1,\ldots,1,0)$. We consider the map that sends an $n$--player game to these $n$ divisors. The universality of divisors states that this map is surjective (\cite[Corollary 6.7]{CBMS}). In other words, any $n$ divisors with the above multi-degree can be obtained from a game. In Section~\ref{sec: universality of divisors}, we explore this notion of universality for Spohn CI varieties for one-edge undirected graphical models. In this case, one does not intersect the Spohn variety with $\left(\P^1\right)^n$, but with the Segre variety $\Mc$. We prove that this notion of the universality of divisors does not hold (Proposition~\ref{cor: universality of divisors}). Moreover, we characterize the divisors that arise from the $n$ divisors defining $C_X$. This allows us to prove in Theorem~\ref{thm: irreducible Nash CI} that the Nash CI curve $C_X$ is smooth and irreducible. 
\para

In Section~\ref{sec: affine universality}, we study the affine universality of the Spohn CI varieties for one-edge undirected graphical models, as Datta does in \cite{datta} for the set of totally mixed Nash equilibria. In \cite{datta}, the author shows that, given a real affine algebraic variety $S$, there exists a game with binary choices such that its set of totally mixed Nash equilibria has an affine open subset isomorphic to $S$. Here, we derive an analogous result for $C_X$. Since the dimension of $C_X$ is at least $1$, the universality theorem cannot hold for $0$-dimensional real algebraic varieties. However, we offer two different approaches for dealing with this challenge. First, in Corollary~\ref{co:affine univ 1} we show that for a real affine algebraic variety $S$, there exists a  game with binary choices and an affine open subset $W_X$ of $C_X$ such that $S~\times~\mathbb{R}^1\simeq W_X$. On the other hand, we prove that in Theorem~\ref{thm: universality of Spohn CI} for $C_X$ the universality theorem holds for real affine algebraic varieties $S\subset \mathbb{R}^m$ defined by $k$ polynomials with $k<m$. 
 
\subsection{Universality of divisors and smoothness of Nash CI curve}\label{sec: universality of divisors}
Let $X^{(1)},\ldots, X^{(n)}\in\mathbb{R}^{2}\times\cdots\times\mathbb R^2$ be the payoff tables of an $n$-player game with binary choices.  Following \cite{datta}, its set of totally mixed Nash equilibria is the intersection of the probability simplex with the variety denoted by $N_{X}\subseteq \left(\P^1\right)^n\subset \P^{2^n-1}$ which is defined by the following $n$ equations 
\begin{equation}\label{eq:nash poly}
G_i = \displaystyle\sum_{j_1,\ldots,\widehat{j_i},\ldots,j_n}
\left(X^{(i)}_{j_1\cdots2\cdots j_n}- X^{(i)}_{j_1\cdots1\cdots j_n} \right)
\tilde{\sigma}^{(1)}_{j_1}\cdots \widehat{\tilde{\sigma}^{(i)}_{j_{i}}}\cdots \tilde{\sigma}^{(n)}_{j_{n}}, \,\,\text{ for } i\in[n],
\end{equation}
where $[\tilde{\sigma}^{(i)}_1:\tilde{\sigma}^{(i)}_2]$ are the coordinates of the $i$th factor of $\left(\P^1\right)^n$.
We denote the divisor of $\left(\P^1\right)^n$  corresponding to $G_i$ by $\tilde{D}_i$. Then, $N_{X}=\tilde{D}_1\cap\cdots\cap \tilde{D}_n$. For $1\leq i\leq n$, let $\tilde{V}_i$ be the vector space of multi-homogeneous polynomials with multi-degree $(1,\ldots,1,\underset{(i)}{0},1,\ldots,1)$, i.e.\
\[
\tilde{V}_i := H^0\left(\left(\P^1\right)^n,\O_{\left(\P^1\right)^n}(1,\ldots,0,\ldots,1)\right).
\]
Note that $\tilde{V}_i$ has dimension $2^{n-1}$.
 Let $D_i'$ be a divisor of $\left(\P^1\right)^n$ defined by a polynomial in $G_i'\in \tilde{V}_i$.

 Then, there exists a payoff table $X^{(i)}$ such that $G_i=G_i'$, and hence $\tilde{D}_i=D_i'$ (see \cite[Corollary 6.7]{CBMS}). 
 In other words, we get that the linear map
 \[
 \begin{array}{ccc}
 \mathbb{R}^{2^n}&\longrightarrow&\tilde{V}_i\\
\tilde{ X}^{(i)}&\longmapsto&G_i
 \end{array}
 \]

 is surjective. Identifying the set of payoff tables with the vector space $\left(\mathbb{R}^{2^n}\right)^n$, the universality in the Nash setting is derived from the surjectivity of the linear map
 \begin{equation}\label{eq:surj 1}
 \begin{array}{ccc}
\left( \mathbb{R}^{2^n}\right)^n&\longrightarrow&\tilde{V}_1\times\cdots\times \tilde{V}_n\\
 (X^{(1)},\ldots,X^{(n)})&\longmapsto&(G_1,\ldots,G_n).
 \end{array}
 \end{equation}
Our goal is to study whether the analogous map for the variety $C_X$ is surjective or not. For this purpose,  for $i\leq n-2$, let $V_i$ be the vector space of multi-homogeneous polynomials of multi-degree $(1,\dots,\underset{(i)}{0},\ldots,1)$, i.e.\
\[
V_i := H^0\left(\Mc,\O_{\Mc}(1,\ldots,\underset{(i)}{0},\ldots,1)\right).
\]
Similarly, we define the vector spaces $V_{n-1}$ and $V_{n}$ as
\[
V_{n-1}=V_n= H^0\left(\Mc,\O_{\Mc}(1,\ldots,1,2)\right).
\]
Note that the dimension of $V_{n-1}$ and $V_{n}$ is $10 \cdot 2^{n-1}$. For every $i$, we consider the linear map 
\[
 \begin{array}{cccc}
 \phi_i:&\mathbb{R}^{2^n}&\longrightarrow&V_i\\
 &X^{(i)}&\longmapsto&F_i.
 \end{array}
\]

Then, the analogous map to \eqref{eq:surj 1} can be constructed as
 \begin{equation}\label{eq:surj 2}
 \begin{array}{ccc}
\left( \mathbb{R}^{2^n}\right)^n&\longrightarrow&V_1\times\cdots\times V_n\\
 (X^{(1)},\ldots,X^{(n)})&\longmapsto&(F_1,\ldots,F_n).
 \end{array}
 \end{equation}
 In Proposition~\ref{cor: universality of divisors} we compute the image of the map (\ref{eq:surj 2}), and we conclude that it is not surjective. \\
\para
 \begin{proposition}\label{cor: universality of divisors}
 Let $\Lambda_i$ be the linear system defined by the image of $\phi_i$. The image $\Lambda_1 \times \ldots \times \Lambda_n$ of the linear map \eqref{eq:surj 2} has dimension $2^{n-1}(n+1)$. In particular, the map \eqref{eq:surj 2} is not surjective.
 \end{proposition}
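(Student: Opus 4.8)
The plan is to exploit the product structure of the map~\eqref{eq:surj 2}. Each polynomial $F_i$ depends only on the payoff table $X^{(i)}$, so the map factors as the product of the individual maps $\phi_i$; consequently its image is exactly $\Lambda_1\times\cdots\times\Lambda_n$, and its dimension equals $\sum_{i=1}^n\dim\Lambda_i=\sum_{i=1}^n\operatorname{rank}(\phi_i)$. It therefore suffices to compute each rank separately, and the problem splits into the easy factors $i\leq n-2$ and the two determinantal factors $i=n-1,n$.

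For $i\leq n-2$ I would read the coefficients of $F_i$ directly off its defining formula: the coefficient of the monomial $\sigma^{(1)}_{j_1}\cdots\widehat{\sigma^{(i)}_{j_i}}\cdots\sigma^{(n-2)}_{j_{n-2}}\tau_{j_{n-1},j_n}$ is the difference $X^{(i)}_{j_1\cdots 2\cdots j_n}-X^{(i)}_{j_1\cdots 1\cdots j_n}$, indexed by the $2^{n-1}$ multi-indices obtained after deleting $j_i$. These monomials form a basis of $V_i$, and the assignment sending $X^{(i)}$ to this tuple of differences is a surjection onto $\mathbb{R}^{2^{n-1}}$ (its kernel consists exactly of the tensors that are constant along the $i$th coordinate). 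Hence $\phi_i$ is surjective and $\dim\Lambda_i=2^{n-1}$ for every $i\leq n-2$.

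The real work, and the reason the overall map fails to be surjective, lies in $\phi_{n-1}$ and $\phi_n$. Expanding the $2\times 2$ determinant defining $F_{n-1}$, I expect to find that only four of the ten degree-two monomials in $\tau$ survive, namely the mixed products $\tau_{11}\tau_{21},\tau_{12}\tau_{21},\tau_{11}\tau_{22},\tau_{12}\tau_{22}$, whereas the six monomials $\tau_{ab}^2$, $\tau_{11}\tau_{12}$, $\tau_{21}\tau_{22}$ never occur. Fixing a $\sigma$-monomial $\sigma^{\mathbf{j}}$, the four surviving coefficients $a_{\mathbf{j}},b_{\mathbf{j}},c_{\mathbf{j}},d_{\mathbf{j}}$ (of $\tau_{11}\tau_{21},\tau_{12}\tau_{21},\tau_{11}\tau_{22},\tau_{12}\tau_{22}$ respectively) are differences of entries of $X^{(n-1)}$ satisfying the single linear relation $d_{\mathbf{j}}=b_{\mathbf{j}}+c_{\mathbf{j}}-a_{\mathbf{j}}$, which is precisely the relation $D_{n-1}=B_{n-1}+C_{n-1}-A_{n-1}$ recorded in the proof of Proposition~\ref{prop:dim 1}. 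A short rank computation shows that, for each fixed $\mathbf{j}$, the induced linear map from the four relevant entries of $X^{(n-1)}$ onto the triple $(a_{\mathbf{j}},b_{\mathbf{j}},c_{\mathbf{j}})$ is surjective; since distinct $\mathbf{j}$ involve disjoint entries of $X^{(n-1)}$, this yields $\dim\Lambda_{n-1}=3\cdot 2^{n-2}$, and the same argument applied to $F_n$ gives $\dim\Lambda_n=3\cdot 2^{n-2}$.

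Summing the contributions gives $\dim(\Lambda_1\times\cdots\times\Lambda_n)=(n-2)\cdot 2^{n-1}+2\cdot 3\cdot 2^{n-2}=(n-2)\cdot 2^{n-1}+3\cdot 2^{n-1}=(n+1)\cdot 2^{n-1}$, as claimed. Non-surjectivity is then immediate: since $F_{n-1}$ never involves any square $\tau_{ab}^2$, the section $\sigma^{(1)}_1\cdots\sigma^{(n-2)}_1\tau_{11}^2\in V_{n-1}$ does not lie in $\Lambda_{n-1}$, so already $\phi_{n-1}$ is not surjective, and hence neither is the product map. I expect the only delicate part to be the bookkeeping in the determinant expansion of $F_{n-1}$ and $F_n$, namely verifying simultaneously that exactly four monomials survive and that $d_{\mathbf{j}}=b_{\mathbf{j}}+c_{\mathbf{j}}-a_{\mathbf{j}}$ is the \emph{only} constraint on their coefficients; everything else reduces to elementary linear algebra.
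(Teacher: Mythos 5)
Your proposal is correct and follows essentially the same route as the paper's proof: surjectivity of $\phi_i$ for $i\leq n-2$, then expanding the determinants defining $F_{n-1}$ and $F_n$ to see that only the four mixed $\tau$-monomials occur with coefficients $(A,B,C,D)$ subject to the single relation $D=B+C-A$, realizable independently for each $\sigma$-monomial, giving $\dim\Lambda_{n-1}=\dim\Lambda_n=3\cdot 2^{n-2}$ and total dimension $2^{n-1}(n+1)$. The only cosmetic difference is that you certify non-surjectivity by exhibiting an explicit section (a $\tau_{1,1}^2$ monomial) outside the image, whereas the paper compares the total dimension against $\dim(V_1\times\cdots\times V_n)=2^{n-1}(n+8)$; both are immediate from the same computation.
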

 \begin{proof}
     First of all, we note that, as in the Nash case, $\phi_i$ is surjective for $i\leq n-2$. Thus, for $i\leq n-2$, $\Lambda_i = V_i$ and $\dim \Lambda_i = 2^{n-1}$. For $i=n-1$, we can rewrite $V_{n-1}$ as the tensor product
     \[H^0(\P^3,\O_{\P^1}(2))\otimes 
    \left( H^0(\P^1,\O_{\P^1}(1))\right)^{\otimes (n-2)}.
     \]
      Expanding  $F_{n-1}$, we obtain 
     \begin{equation}\label{eq: Fn-1} \begin{array}{ccl}
     F_{n-1} & =& \displaystyle\sum_{j_1,\ldots,j_{n-2}}\sigma_{j_1}^{(1)}\cdots \sigma_{j_{n-2}}^{(n-2)}\left(
     A_{j_1\cdots j_{n-2}}^{(n-1)}\tau_{1,1}\tau_{2,1}+
     B_{j_1\cdots j_{n-2}}^{(n-1)}\tau_{1,2}\tau_{2,1}+\right. \\
      \noalign{\vspace*{1mm}} 
     & & \left. 
     C_{j_1\cdots j_{n-2}}^{(n-1)}\tau_{1,1}\tau_{2,2}+
     D_{j_1 \cdots j_{n-2}}^{(n-1)}\tau_{1,2}\tau_{2,2}
     \right)
     \end{array}
     \end{equation}
     where the coefficients are defined as
     \[
     \begin{array}{ll}
     A_{j_1\cdots j_{n-2}}^{(n-1)}= X_{j_1 \cdots j_{n-2}21}^{(n-1)}-X_{j_1 \cdots j_{n-2}11}^{(n-1)},& 
     C_{j_1\cdots j_{n-2}}^{(n-1)} = X_{j_1 \cdots j_{n-2}22}^{(n-1)}-X_{j_1 \cdots j_{n-2}11}^{(n-1)},
     \\ \noalign{\vspace*{1mm}} 
     B_{j_1\cdots j_{n-2}}^{(n-1)}= X_{j_1 \cdots j_{n-2}21}^{(n-1)}-X_{j_1 \cdots j_{n-2}12}^{(n-1)},&
     \!\!\!D_{j_1 \cdots j_{n-2}}^{(n-1)}\!\!=B_{j_1\cdots j_{n-2}}^{(n-1)}\!+C_{j_1\cdots j_{n-2}}^{(n-1)}\!-A_{j_1\cdots j_{n-2}}^{(n-1)}.
     \end{array}
     \]
     From this expression we deduce that $\Lambda_{n-1} = W_{n-1}\otimes\left( H^0(\P^1,\O_{\P^1}(1))\right)^{\otimes (n-2)}$, where 
     \[
     W_{n-1} = \{ A\tau_{1,1}\tau_{2,1}+
     B\tau_{1,2}\tau_{2,1}+
     C\tau_{1,1}\tau_{2,2}+
     (B+C-A)\tau_{1,2}\tau_{2,2}: \text{ for } A,B,C\in\mathbb{R}  \}. 
     \]
     Thus, $W_{n-1}$ is a linear subspace of dimension $3$ and hence,
     $
     \dim \Lambda_{n-1} = 3\cdot  2^{n-2}
     $.

     Analogously, for $F_n$ we get the expression 
     
     \begin{equation}\label{eq: Fn} \begin{array}{ccl}
     F_{n} & =& \displaystyle\sum_{j_1,\ldots,j_{n-2}}\sigma_{j_1}^{(1)}\cdots \sigma_{j_{n-2}}^{(n-2)}\left(
     A_{j_1\cdots j_{n-2}}^{(n)}\tau_{1,1}\tau_{1,2}+
     B_{j_1\cdots j_{n-2}}^{(n)}\tau_{1,2}\tau_{2,1}+\right. \\
      \noalign{\vspace*{1mm}} 
     & & \left. 
     C_{j_1,\ldots,j_{n}}^{(n)}\tau_{1,1}\tau_{2,2}+
     D_{j_1,\ldots,j_{n}}^{(n)}\tau_{2,1}\tau_{2,2}
     \right)
     \end{array}
     \end{equation}
     where the coefficients are defined as
     \[
     \begin{array}{ll}
     A_{j_1\cdots j_{n-2}}^{(n)}= X_{j_1 \cdots j_{n-2}12}^{(n)}-X_{j_1 \cdots j_{n-2}11}^{(n)},& 
     C_{j_1\cdots j_{n-2}}^{(n)} = X_{j_1 \cdots j_{n-2}22}^{(n)}-X_{j_1 \cdots j_{n-2}11}^{(n)},
     \\ \noalign{\vspace*{1mm}} 
     B_{j_1\cdots j_{n-2}}^{(n)}= X_{j_1 \cdots j_{n-2}12}^{(n)}-X_{j_1 \cdots j_{n-2}21}^{(n)},&
     \!\!\!D_{j_1 \cdots j_{n-2}}^{(n)}\!\!=B_{j_1\cdots j_{n-2}}^{(n)}\!+C_{j_1\cdots j_{n-2}}^{(n)}\!-A_{j_1\cdots j_{n-2}}^{(n)}.
     \end{array}
     \]
     The dimension of $\Lambda_n$ follows from this expression as before. Then,  the image of the map \eqref{eq:surj 2} is $\Lambda_1\times\cdots \times \Lambda_n$ and it has dimension $2^{n-1}(n+1)$. It follows that the map \eqref{eq:surj 2} is not surjective since the dimension of $V_1\times \cdots \times V_n$ is $2^{n-1}(n+8)$. \end{proof}

 As a consequence of this proposition, we conclude that, in contrast with the Nash case,  the universality for divisors does not hold. In particular, this means that the linear systems $\Lambda_1,\ldots,\Lambda_n$ might have base locus. From Bertini's theorem, we deduce that for generic payoff tables, $C_X$ is smooth away from these base loci. Hence, in order to study the smoothness of $C_X$ we need to compute the base locus of each of these linear systems. Since $\phi_i$ is surjective for $i\in[n-2]$, we get that $\Lambda_i$ is complete for $i\in[n-2]$ and, hence, base point free. The next lemma computes the base locus of $\Lambda_{n-1}$ and $\Lambda_n$.
 \para 
\begin{lemma}
 \begin{enumerate}
     \item[(1)]  The base locus of $\Lambda_{n-1}$ is $\left(\P^1\right)^{n-2}\times \left(L_1^{(n-1)}\cup L_2^{(n-1)}\cup L_3^{(n-1)}\right)$ where $L_1^{(n-1)}, L_2^{(n-1)}, L_3^{(n-1)}$ are the lines of $\P^3$ defined by the equations 
 \[
\begin{array}{cccc}
\{\tau_{1,1}=\tau_{1,2}= 0\}, & \{ \tau_{2,1}=\tau_{2,2}= 0\},&
\text{ and } & \{\tau_{1,1}+\tau_{1,2}=\tau_{2,1}+\tau_{2,2} = 0 \}
\end{array}
 \]
 respectively.
 \item[(2)] The base locus of $\Lambda_{n}$ is $\left(\P^1\right)^{n-2}\times \left(L_1^{(n)}\cup L_2^{(n)}\cup L_3^{(n)}\right)$ where $L_1^{(n)}, L_2^{(n)}, L_3^{(n)}$ are the lines of $\P^3$ defined by the equations 
 \[
\begin{array}{cccc}
\{\tau_{1,1}=\tau_{2,1}= 0\}, & \{ \tau_{1,2}=\tau_{2,2}= 0\},&
\text{ and } & \{\tau_{1,1}+\tau_{2,1}=\tau_{1,2}+\tau_{2,2} = 0 \}
\end{array}
 \]
 respectively.
 \end{enumerate}
 \end{lemma}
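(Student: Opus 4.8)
The plan is to reduce the computation to a base-locus problem in the single factor $\P^3$, and then to solve that by a short case analysis. Recall from the proof of Proposition~\ref{cor: universality of divisors} that $\Lambda_{n-1} = W_{n-1}\otimes \left(H^0(\P^1,\O_{\P^1}(1))\right)^{\otimes(n-2)}$, where $W_{n-1}$ is the three-dimensional space of quadrics in $\tau$ spanned by the coefficients of $A$, $B$, $C$. First I would show that the base locus $\mathrm{Bs}(\Lambda_{n-1})$ splits as $\left(\P^1\right)^{n-2}\times \mathrm{Bs}(W_{n-1})$, where $\mathrm{Bs}(W_{n-1})\subseteq\P^3$ is the common zero locus of $W_{n-1}$. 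The inclusion $\supseteq$ is immediate, since by \eqref{eq: Fn-1} every section of $\Lambda_{n-1}$ is a sum of terms of the shape $(\text{monomial in }\sigma)\cdot q(\tau)$ with $q\in W_{n-1}$. For the reverse inclusion, given a point $(s_1,\ldots,s_{n-2},t)$ in the base locus and any $q\in W_{n-1}$, I would pick linear forms $\ell_i$ on the $i$th $\P^1$ with $\ell_i(s_i)\neq 0$; then the decomposable section $q\cdot\prod_i\ell_i$ belongs to $\Lambda_{n-1}$ (as $W_{n-1}$ is a linear subspace), and its vanishing at $(s,t)$ forces $\left(\prod_i\ell_i(s_i)\right)q(t)=0$, hence $q(t)=0$. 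As $q$ is arbitrary, $t\in\mathrm{Bs}(W_{n-1})$. This step uses only that the first $n-2$ factors are base-point free, and it is the one conceptual point of the proof.

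Second I would compute $\mathrm{Bs}(W_{n-1})$ explicitly. Collecting the general member of $W_{n-1}$ by its coefficients gives $A\,Q_A + B\,Q_B + C\,Q_C$, where $Q_A=\tau_{1,1}\tau_{2,1}-\tau_{1,2}\tau_{2,2}$, $Q_B=\tau_{1,2}(\tau_{2,1}+\tau_{2,2})$, and $Q_C=\tau_{2,2}(\tau_{1,1}+\tau_{1,2})$, so that $\mathrm{Bs}(W_{n-1})=\mathbb{V}(Q_A,Q_B,Q_C)$. Since $Q_B$ and $Q_C$ each factor, I would split into the four cases determined by $\{\tau_{1,2}=0\ \text{or}\ \tau_{2,1}+\tau_{2,2}=0\}$ and $\{\tau_{2,2}=0\ \text{or}\ \tau_{1,1}+\tau_{1,2}=0\}$, imposing $Q_A=0$ in each. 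A direct check shows the four cases collapse exactly onto the three lines $L_1^{(n-1)}$, $L_2^{(n-1)}$, $L_3^{(n-1)}$ of the statement: for instance, $\tau_{1,2}=\tau_{2,2}=0$ together with $Q_A=\tau_{1,1}\tau_{2,1}=0$ yields only points of $L_1^{(n-1)}\cup L_2^{(n-1)}$, while the case $\tau_{2,1}+\tau_{2,2}=\tau_{1,1}+\tau_{1,2}=0$ is precisely $L_3^{(n-1)}$ and automatically satisfies $Q_A=0$. Conversely, each $L_j^{(n-1)}$ is checked to annihilate all three quadrics, establishing part~(1).

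For part~(2), rather than repeating the computation I would exploit a symmetry: the involution of $\P^3$ interchanging $\tau_{1,2}\leftrightarrow\tau_{2,1}$ carries the expression \eqref{eq: Fn-1} for $F_{n-1}$ to the expression \eqref{eq: Fn} for $F_n$, hence sends $W_{n-1}$ to $W_n$ and $\Lambda_{n-1}$ to $\Lambda_n$. The same involution maps $L_1^{(n-1)},L_2^{(n-1)},L_3^{(n-1)}$ to $L_1^{(n)},L_2^{(n)},L_3^{(n)}$ respectively, so part~(2) follows from part~(1) by transport of structure.

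I expect the main obstacle to be purely bookkeeping: ensuring that the four-case analysis in the second step produces no spurious components beyond the three asserted lines. The delicate case is $\tau_{1,2}=\tau_{2,2}=0$, where $Q_A$ degenerates to $\tau_{1,1}\tau_{2,1}$ and one must verify that both resulting points already lie on $L_1^{(n-1)}\cup L_2^{(n-1)}$. The reduction in the first step is conceptually the crux but technically short, being a direct consequence of the base-point freeness of the $\sigma$-factors.
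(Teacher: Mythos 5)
Your proposal is correct and follows essentially the same route as the paper: the same tensor-product decomposition $\Lambda_{n-1}=W_{n-1}\otimes\left(H^0(\P^1,\O_{\P^1}(1))\right)^{\otimes(n-2)}$ reducing to the base locus of $W_{n-1}$ in $\P^3$, and the same three generators $\tau_{1,1}\tau_{2,1}-\tau_{1,2}\tau_{2,2}$, $\tau_{1,2}(\tau_{2,1}+\tau_{2,2})$, $\tau_{2,2}(\tau_{1,1}+\tau_{1,2})$. The only differences are that you spell out the two steps the paper leaves as assertions (the splitting of the base locus via base-point-freeness of the $\sigma$-factors, and the four-case analysis), and you obtain part (2) by the involution $\tau_{1,2}\leftrightarrow\tau_{2,1}$ rather than by repeating the computation, both of which are sound.
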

 \begin{proof}
     As in the proof of Proposition~\ref{cor: universality of divisors}, we write $\Lambda_{n-1}$ as the tensor product
      \[
     \Lambda_{n-1} =  W_{n-1}\otimes \left( H^0(\P^1,\O_{\P^1}(1))\right)^{\otimes (n-2)}.
     \]
     Thus, to compute the base locus of $\Lambda_{n-1}$ it is enough to compute the base locus of $W_{n-1}$ in $\P^3$. The elements of this linear system are polynomials of the form 
     \[
     A\tau_{1,1}\tau_{2,1} + B\tau_{1,2}\tau_{2,1} + C\tau_{1,1}\tau_{2,2} + (B+C-A)\tau_{1,2}\tau_{2,2}
     \]
     for $[A:B:C]\in\P^2$. In particular, for $[A:B:C]\in\{[1:0:0],[0:1:0],[0:0:1]\}$ we obtain the polynomials $\tau_{1,1}\tau_{2,1}-\tau_{1,2}\tau_{2,2},\tau_{1,2}(\tau_{2,1}+\tau_{2,2}),\tau_{2,2}(\tau_{1,1}+\tau_{1,2})$, which generate $W_i$. Then, one can check that the base locus of these three polynomials is exactly $L_1^{(n-1)}\cup L_2^{(n-1)}\cup L_3^{(n-1)}$. Thus, we conclude that the base locus of $\Lambda_{n-1}$ is $\left(\P^1\right)^{n-2}\times \left(L_1^{(n-1)}\cup L_2^{(n-1)}\cup L_3^{(n-1)}\right)$. A similar computation derives the statement for~$\Lambda_{n}$.
 \end{proof}
 
 \para 
 
 For generic payoff tables, $\mathbb{V}(F_1,\ldots,F_{n-2},F_{n-1})$ and $\mathbb{V}(F_1,\ldots,F_{n-2},F_{n})$ are surfaces and their intersection is the Nash CI curve $C_X$. By Bertini's theorem, we obtain that for generic payoff tables $\mathbb{V}(F_1,\ldots,F_{n-2},F_{n-1})$ (respectively $\mathbb{V}(F_1,\ldots,F_{n-2},F_{n})$) is smooth away from the base loci $\Lambda_{n-1}$ (respectively $\Lambda_{n}$). The next proposition states that these surfaces are indeed smooth.

\begin{proposition}\label{prop: smooth loci n-1}
For generic payoff tables, $\mathbb{V}(F_1,\ldots,F_{n-2},F_{n-1})$ and $\mathbb{V}(F_1,\ldots,F_{n-2},F_{n})$ are smooth surfaces.
\end{proposition}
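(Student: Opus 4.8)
The plan is to combine Bertini's theorem, which controls smoothness away from the base loci of $\Lambda_{n-1}$ and $\Lambda_{n}$, with an incidence-variety dimension count that excludes singular points \emph{on} those base loci for generic games. I treat $\mathbb{V}(F_1,\ldots,F_{n-2},F_{n-1})$; the argument for $\mathbb{V}(F_1,\ldots,F_{n-2},F_{n})$ is identical after replacing the lines $L_j^{(n-1)}$ by $L_j^{(n)}$. By Proposition~\ref{cor: universality of divisors} the systems $\Lambda_1,\ldots,\Lambda_{n-2}$ are complete and hence base point free, so Bertini's theorem gives that $T:=\mathbb{V}(F_1,\ldots,F_{n-2})$ is a smooth threefold for generic $X^{(1)},\ldots,X^{(n-2)}$ with $d_qF_1,\ldots,d_qF_{n-2}$ independent at each $q\in T$. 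Setting $B:=\mathrm{Bs}(\Lambda_{n-1})=\left(\P^1\right)^{n-2}\times\bigl(L_1^{(n-1)}\cup L_2^{(n-1)}\cup L_3^{(n-1)}\bigr)$ as in the preceding lemma, a second application of Bertini to $\Lambda_{n-1}$ restricted to $T$ shows that $S:=T\cap D_{n-1}$ is smooth away from $B\cap T$ for generic $X^{(n-1)}$. It remains to prove smoothness along $B\cap T$.

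The local input is that on each line $L_j^{(n-1)}$ every quadric $Q_{\mathbf j}(\tau)=A_{\mathbf j}\tau_{1,1}\tau_{2,1}+B_{\mathbf j}\tau_{1,2}\tau_{2,1}+C_{\mathbf j}\tau_{1,1}\tau_{2,2}+D_{\mathbf j}\tau_{1,2}\tau_{2,2}$ from the expansion \eqref{eq: Fn-1} of $F_{n-1}$ vanishes identically (this is exactly why these lines form the base locus). Hence at $q\in B$ every partial derivative of $F_{n-1}$ with respect to the $\sigma^{(i)}$ vanishes, as does the derivative along the line, and the only surviving differentials are the two transverse $\tau$-directions; for $L_1^{(n-1)}=\{\tau_{1,1}=\tau_{1,2}=0\}$ one computes
\[
d_qF_{n-1}=\Big(\textstyle\sum_{\mathbf j}\sigma^{\mathbf j}(A_{\mathbf j}\tau_{2,1}+C_{\mathbf j}\tau_{2,2})\Big)\,d\tau_{1,1}+\Big(\textstyle\sum_{\mathbf j}\sigma^{\mathbf j}(B_{\mathbf j}\tau_{2,1}+D_{\mathbf j}\tau_{2,2})\Big)\,d\tau_{1,2},
\]
where $\sigma^{\mathbf j}=\sigma^{(1)}_{j_1}\cdots\sigma^{(n-2)}_{j_{n-2}}$. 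Thus $d_qF_{n-1}$ always lies in the fixed $2$-plane $P_q:=\langle d\tau_{1,1},d\tau_{1,2}\rangle\subseteq T_q^{*}\Mc$, and as $X^{(n-1)}$ varies it sweeps out all of $P_q$ for generic $q$.

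For the count, let $\Sigma$ be the incidence variety of pairs $(X,q)$ in $\mathcal G\times\Mc$ with $q\in B$, $q\in T$, and $S$ singular at $q$, where $\mathcal G$ denotes the space of games. Since $T$ is smooth with independent $d_qF_1,\ldots,d_qF_{n-2}$, the last condition is precisely $d_qF_{n-1}\in U_q:=\langle d_qF_1,\ldots,d_qF_{n-2}\rangle$. I project $\Sigma$ to $B$. Over a generic $q$, the equations $F_1(q)=\cdots=F_{n-2}(q)=0$ are $n-2$ independent conditions on the disjoint blocks $X^{(1)},\ldots,X^{(n-2)}$, and for generic such tables $U_q$ is a generic $(n-2)$-plane of $T_q^{*}\Mc$; since $(n-2)+2<n+1$, we get $U_q\cap P_q=\{0\}$. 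Along this open locus the singularity condition $d_qF_{n-1}\in U_q$ forces $d_qF_{n-1}=0$, i.e.\ two further independent conditions on $X^{(n-1)}$. The generic fibre of $\Sigma\to B$ therefore has codimension $n$ in $\mathcal G$, so $\dim\Sigma=(n-1)+(\dim\mathcal G-n)=\dim\mathcal G-1$, and the projection of $\Sigma$ to $\mathcal G$ is not dominant. Combined with the Bertini step this shows that for generic payoff tables $S$ has no singular points off or on $B$, and the same reasoning applied to $\Lambda_n$ completes the proof.

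The main obstacle is the analysis along the base locus in the last step: one must establish both that $d_qF_{n-1}$ is confined to the two-dimensional conormal plane $P_q$ of the line (the vanishing of all $\sigma$-derivatives on $B$), and that the generically $(n-2)$-dimensional span $U_q$ meets $P_q$ only in $0$. Together these collapse the singularity condition to the codimension-two vanishing $d_qF_{n-1}=0$, which is exactly what keeps $\dim\Sigma$ below $\dim\mathcal G$; without either fact the naive count would not suffice and the surface could acquire singularities along $B$.
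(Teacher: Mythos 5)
Your overall skeleton---Bertini away from the base locus, plus an analysis on the base locus exploiting that $d_qF_{n-1}$ is confined to the plane $P_q=\langle d\tau_{1,1},d\tau_{1,2}\rangle$---is the same as the paper's (which does the on-locus analysis via a block Jacobian on $\Omega_i=S\cap\bigl(\left(\P^1\right)^{n-2}\times L_i\bigr)$ rather than an incidence count). But your execution has a genuine gap in the crucial step. First, a repairable flaw: the claim that for generic admissible tables $U_q$ is a \emph{generic} $(n-2)$-plane of $T_q^{*}\Mc$ is false. Since $F_i$ does not involve the variables $\sigma^{(i)}$, the differential $d_qF_i$ always annihilates the $i$-th $\P^1$-tangent direction, i.e.\ lies in a fixed hyperplane $H_i\subset T_q^{*}\Mc$, so $U_q$ is never in general position. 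The conclusion you need, $U_q\cap P_q=\{0\}$, does survive (because $P_q\subset H_i$ for every $i$, and generic vectors $v_i\in H_i$ have independent images in the quotient $T_q^{*}\Mc/P_q$), but this is an argument you do not give, and the naive count ``$(n-2)+2<n+1$'' is not a proof of it.

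Second, and more seriously, your fiber-dimension bound is established only over a \emph{generic} $q\in B$, whereas non-dominance of $\Sigma\to\mathcal{G}$ requires bounding $\dim\Sigma$ over every stratum of $B$---and the degeneration you are implicitly dismissing actually occurs. At the point $L_1^{(n-1)}\cap L_3^{(n-1)}=[0:0:1:-1]$, where $\tau_{2,1}+\tau_{2,2}=0$, the built-in relation $D^{(n-1)}_{\mathbf{j}}=B^{(n-1)}_{\mathbf{j}}+C^{(n-1)}_{\mathbf{j}}-A^{(n-1)}_{\mathbf{j}}$ forces the two components in your displayed formula for $d_qF_{n-1}$ to coincide (both equal $\tau_{2,2}\sum_{\mathbf{j}}\sigma^{\mathbf{j}}(C^{(n-1)}_{\mathbf{j}}-A^{(n-1)}_{\mathbf{j}})$), so ``$d_qF_{n-1}=0$'' imposes only \emph{one} condition on $X^{(n-1)}$ there, not two. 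The stratified count still closes---over $\left(\P^1\right)^{n-2}\times\{[0:0:1:-1]\}$ one loses a condition but also a dimension in the base---but you neither notice the collapse nor perform that count, and your bound $\dim\Sigma\le\dim\mathcal{G}-1$ has no slack to absorb an unexamined stratum. This collapse is exactly the delicate point the paper's proof confronts explicitly: there one rewrites $\partial F_n/\partial\tau_{2,1}=\sum_{\mathbf{j}}\sigma^{\mathbf{j}}\bigl(B^{(n)}_{\mathbf{j}}(\tau_{1,2}+\tau_{2,2})+(C^{(n)}_{\mathbf{j}}-A^{(n)}_{\mathbf{j}})\tau_{2,2}\bigr)$ and checks that the finitely many points of $\mathbb{V}(\tilde F_1,\ldots,\tilde F_{n-2},\partial F_n/\partial\tau_{1,1})$ avoid $\{\tau_{1,2}+\tau_{2,2}=0\}$ for generic tables. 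A smaller omission of the same kind: your identification of singularity with $d_qF_{n-1}\in U_q$ presupposes that $d_qF_1,\ldots,d_qF_{n-2}$ are independent, which need not hold for all pairs $(X,q)\in\Sigma$; that stratum must be excluded separately (e.g.\ absorbed into the Bertini step for $T$).
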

 \begin{proof}
 Denote $\mathbb{V}(F_1,\ldots,F_{n-2},F_{n-1})$ and $\mathbb{V}(F_1,\ldots,F_{n-2},F_{n})$ by $S_{X,1}$ and $S_{X,2}$ respectively. We prove the result for $S_{X,2}$ (the analogous proof works for $S_{X,1}$). By Bertini's theorem, it is enough to check smoothness at the points of the base locus of $\Lambda_n$. Let $\Omega_i= S_{X,2}\cap\left(\left(\P^1\right)^{n-2}\times L_i^{(n)}\right)$. Then, $\Omega_1\cup \Omega_2\cup \Omega_3$ is the intersection of $S_{X,2}$ with the base locus of $\Lambda_n$. We check the Jacobian criterion at the points of $\Omega_1\cup \Omega_2\cup \Omega_3$.

We will focus on $\Omega_1$  (similar reasoning works  for $\Omega_2$ and $\Omega_3$). First, we prove that $\Omega_1$ is smooth.
Let $\tilde{F}_i$ be the polynomial resulting from restricting $F_i$ to $\Omega_1$, i.e., substituting $\tau_{1,1}$ and $\tau_{2,1}$ by $0$ in $F_i$. Then, $\Omega_1=\mathbb{V}(\tilde{F}_1,\ldots,\tilde{F}_{n-2})$.
Moreover, $\tilde{F}_1,\ldots,\tilde{F}_{n-2}$ are generic elements of complete linear systems of $\left(\P^1\right)^{n-2}\times L_1^{(n)}$. By Bertini's Theorem, $\Omega_1$ is smooth. 

\para

Now  we check the Jacobian criterion for $S_{X,2}$ at $\Omega_1$. 
Let $J_{S_{X,2}}(x)$ be the Jacobian matrix of $F_1,\ldots,F_{n-2},F_n$ at $x$, written as:

\[J_{S_{X,2}}(x):=
\begin{blockarray}{cccc|c}
    & F_1&\cdots&F_{n-2}&F_n \\
    \begin{block}{c(ccc|c)}
    \vdots &  \BAmulticolumn{3}{c|}{\multirow{5}{*}{$A(x)$}}&\BAmulticolumn{1}{c}{\multirow{5}{*}{$B(x)$}}\\
    \frac{\partial}{\partial \sigma_{j_i}^{(i)}}&  &&&\\
    \vdots &  &&&\\
    \frac{\partial}{\partial \tau_{1,2}} &   &&&\\[3pt]
    \frac{\partial}{\partial \tau_{2,2}}  &  &&&
    \\[6pt]
    \cline{1-5}
    \rule{0pt}{3ex}    
    \frac{\partial}{\partial \tau_{1,1}}  & \BAmulticolumn{3}{c|}{\multirow{2}{*}{$C(x)$}}&\BAmulticolumn{1}{c}{\multirow{2}{*}{$D(x)$}}\\[3pt]
    \frac{\partial}{\partial \tau_{2,1}}  &  &&&\\
    \end{block}
  \end{blockarray}
  \,\,\,
  .
\]

We observe that for $x\in \Omega_1$, $B(x)=0$ and $A(x)$ is the Jacobian of $\tilde{F}_1,\ldots,\tilde{F}_{n-2}$ w.r.t.\ $\sigma_{1}^{(1)},\sigma_{2}^{(1)},\ldots,\sigma_{1}^{(n-2)},\sigma_{2}^{(n-2)},\tau_{1,2},\tau_{2,2}$. Since, $\Omega_1$ is smooth, $A(x)$ has maximal rank. Hence, $J_{S_{X,2}}(x) $ has maximal rank if and only if $D(x)$ has maximal rank. Thus, a point $x\in\Omega_1$ is a singular point of $S_{X,2}$ if and only if it lies in  $\mathbb{V}(\frac{\partial F_n}{\partial \tau_{1,1}},\frac{\partial F_n}{\partial \tau_{2,1}})$. These two derivatives are 
\[\begin{array}{c}\displaystyle
    \frac{\partial F_n}{\partial \tau_{1,1}}=\displaystyle
  \sum_{j_1,\ldots,j_{n-2}}\sigma_{j_1}^{(1)}\cdots\sigma_{j_{n-2}}^{(n-2)}(A^{(n)}_{j_1\cdots j_{n-2}}\tau_{1,2}+ C^{(n)}_{j_1\cdots j_{n-2}}\tau_{2,2} ), \\ \displaystyle
    \frac{\partial F_n}{\partial \tau_{2,1}} = \displaystyle
  \sum_{j_1,\ldots,j_{n-2}}\sigma_{j_1}^{(1)}\cdots\sigma_{j_{n-2}}^{(n-2)}(B^{(n)}_{j_1\cdots j_{n-2}}\tau_{1,2}+ D^{(n)}_{j_1\cdots j_{n-2}}\tau_{2,2} ).
    \end{array}
    \]

As before, $ \frac{\partial F_n}{\partial \tau_{1,1}}$ is a generic element of a complete linear system of $\left(\P^1\right)^{n-2}\times L_1^{(n)}$.
In particular, we deduce that for generic payoff tables, $\mathbb{V}(\tilde{F}_1,\ldots,\tilde{F}_{n-2}, \frac{\partial F_n}{\partial \tau_{1,1}})$ has dimension $0$. We write $ \frac{\partial F_n}{\partial \tau_{2,1}}$ as 
     \[
      \frac{\partial F_n}{\partial \tau_{2,1}} = \displaystyle
  \sum_{j_1,\ldots,j_{n-2}}\sigma_{j_1}^{(1)}\cdots\sigma_{j_{n-2}}^{(n-2)}(B^{(n)}_{j_1\cdots j_{n-2}}(\tau_{1,2}+\tau_{2,2})+ (C^{(n)}_{j_1\cdots j_{n-2}}-A^{(n)}_{j_1\cdots j_{n-2}})\tau_{2,2} ).
     \]
     For generic coefficients $A^{(n)}_{j_1\cdots j_{n-2}}$ and $C^{(n)}_{j_1\cdots j_{n-2}}$, $\mathbb{V}(\tilde{F}_1,\ldots,\tilde{F}_{n-2}, \frac{\partial F_n}{\partial \tau_{1,1}},\tau_{1,2}+\tau_{2,2})$ is empty. Hence, for $x\in \mathbb{V}(\tilde{F}_1,\ldots,\tilde{F}_{n-2}, \frac{\partial F_n}{\partial \tau_{1,1}})$, $ \frac{\partial F_n}{\partial \tau_{2,1}}(x)=0$ defines a proper subvariety in the space of coefficients $B^{(n)}_{j_1\cdots j_{n-2}}$.
     Thus, for generic payoff tables,  $\mathbb{V}(\tilde{F}_1,\ldots,\tilde{F}_{n-2}, \frac{\partial F_n}{\partial \tau_{1,1}}, \frac{\partial F_n}{\partial \tau_{2,1}})$ is empty and we conclude that $S_{X,2}$ is smooth at $\Omega_1$. The same argument follows for $\Omega_2$ and $\Omega_3$.\end{proof}
 \para 

In particular, applying Bertini's theorem to $\mathbb{V}(F_1,\ldots,F_{n-2},F_{n-1})$ and the linear system $\Lambda_n$, we obtain that for generic payoff tables, the singular locus of $C_X$ lies in $\mathbb{V}(F_1,\ldots,F_{n-2},F_{n-1})$ with the base locus of $\Lambda_n$. Similarly, applying the same argument to $\mathbb{V}(F_1,\ldots,F_{n-2},F_{n})$ and $\Lambda_{n-1}$, we deduce that the singular locus lies in the intersection of $\mathbb{V}(F_1,\ldots,F_{n-2},F_{n})$ with the base locus of $\Lambda_{n-1}$. This implies that for generic payoff tables, the singular locus of $C_X$ lies in the intersection of the base loci of $\Lambda_{n-1}$ and $\Lambda_n$. This intersection is 
 \[
  \left(\P^1\right)^{n-2}\times \{q_1,q_2,q_3,q_4,q_5\},
 \]
where $q_1,\ldots,q_5$ are $[1:0:0:0],[0:1:0:0], [0:0:1:0],[0:0:0:1],[1:-1:-1:1]$ respectively. In the next theorem, we deduce the smoothness from studying locally the smoothness at the points of this intersection.

\para 

\begin{theorem}\label{thm: irreducible Nash CI}
For generic payoff tables, the Nash CI curve $C_X$ is smooth and irreducible.
\end{theorem}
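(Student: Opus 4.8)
The plan is to establish smoothness and irreducibility separately, leveraging the work already done in Proposition~\ref{prop: smooth loci n-1}. By the discussion preceding the theorem, the singular locus of $C_X$ for generic payoff tables is confined to the finite union of fibers $\left(\P^1\right)^{n-2}\times\{q_1,\ldots,q_5\}$, where $q_1,\ldots,q_5$ are the five points of $\P^3$ at which the base loci of $\Lambda_{n-1}$ and $\Lambda_n$ meet. So for smoothness, I would first reduce to a local analysis at these five families of points. At such a point $x$, both $F_{n-1}$ and $F_n$ lie in the base locus of their respective linear systems, so the naive Jacobian argument of Proposition~\ref{prop: smooth loci n-1} fails; the key idea is that $C_X$ is cut out as the intersection $S_{X,1}\cap S_{X,2}$ of the two smooth surfaces, and I would apply the Jacobian criterion to the full system $F_1,\ldots,F_n$ directly.

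\textbf{Smoothness.} Fixing one of the five points $q_j$, I would substitute the corresponding vanishing $\tau$-coordinates into the $F_i$ and study the restricted equations $\tilde F_i$ on the fiber $\left(\P^1\right)^{n-2}$, exactly as in the proof of Proposition~\ref{prop: smooth loci n-1}. The Jacobian of $F_1,\ldots,F_n$ at $x\in \left(\P^1\right)^{n-2}\times\{q_j\}$ decomposes into blocks according to differentiation with respect to the $\sigma$-variables and the $\tau$-variables; the block coming from $\frac{\partial}{\partial\sigma}$ and $\frac{\partial F_1}{\partial\cdot},\ldots,\frac{\partial F_{n-2}}{\partial\cdot}$ already has maximal rank because $\Omega_j$ is smooth. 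What remains is to show that the partial derivatives of $F_{n-1}$ and $F_n$ with respect to the four $\tau$-variables span a rank-$2$ complement, so that the total Jacobian has the corank expected of a smooth curve. This is a genericity argument: for each $q_j$ I would write out the relevant $\tau$-derivatives of $F_{n-1}$ and $F_n$ (which become generic elements of complete linear systems on the fiber after restriction), and show that the simultaneous vanishing of the appropriate derivatives, cut out against $\tilde F_1,\ldots,\tilde F_{n-2}$, imposes enough independent conditions to be empty for generic $A^{(k)}_{j_1\cdots j_{n-2}}, B^{(k)}_{j_1\cdots j_{n-2}}, C^{(k)}_{j_1\cdots j_{n-2}}$, mirroring the dimension count already carried out for $\Omega_1$. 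Since there are only five points $q_j$ and five analogous computations, this is routine but must be done for each.

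\textbf{Irreducibility.} Once $C_X$ is known to be smooth, irreducibility is equivalent to connectedness, which is precisely the content of Lemma~\ref{lem: connected}: for generic payoff tables $h^0(C_X,\O_{C_X})=1$, so $C_X$ has a single connected component, and a smooth connected curve is irreducible. Thus the irreducibility follows immediately by combining the smoothness just established with Lemma~\ref{lem: connected}.

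\textbf{Main obstacle.} The hard part will be the local Jacobian analysis at the five points $q_1,\ldots,q_5$, since these are exactly the points where the two base loci overlap and the standard Bertini argument gives no information. The delicate step is verifying that at each $q_j$ the four $\tau$-derivatives of $F_{n-1}$ and $F_n$ genuinely contribute rank $2$ beyond the smooth-fiber block; this requires checking that the simultaneous vanishing conditions define a proper (indeed empty) subvariety in the parameter space for generic coefficients, and the bookkeeping differs from point to point depending on which coordinates of $q_j$ vanish. The symmetric point $q_5=[1:-1:-1:1]$, lying on the third base-locus line of both $\Lambda_{n-1}$ and $\Lambda_n$, is likely the most subtle case and would need its own careful computation.
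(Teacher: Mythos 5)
Your proposal is correct and follows essentially the same route as the paper's own proof: reduce irreducibility to smoothness via the connectedness established in Lemma~\ref{lem: connected}, use Bertini to confine the singular locus to the fibers $\left(\P^1\right)^{n-2}\times\{q_1,\ldots,q_5\}$, and then apply the Jacobian criterion there with the same block decomposition (the $\sigma$-block of full rank by smoothness of the fiber intersection, and a genericity argument giving rank $2$ for the $\tau$-derivative block of $F_{n-1},F_n$). The paper likewise treats the five points by the same argument, handling $S_1$ explicitly and invoking symmetry for the rest, so your plan matches it in both structure and substance.
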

\begin{proof}
By Lemma~\ref{lem: connected}, $C_X$ is connected. Therefore, it is enough to prove the smoothness of $C_X$ in order to conclude the irreducibility. For $i\in [5]$ let $S_i$ be the intersection of $ C_X$ with $ \left(\P^1\right)^{n-2}\times \{q_i\}$. By Bertini's theorem, the singular locus of $C_X$ lies in the intersection of $C_X$ with $ S_1\cup\cdots\cup S_5$. 
The strategy we follow is to apply locally the Jacobian Criterion on the points of this intersection. The reasoning is similar to the proof of Proposition~\ref{prop: smooth loci n-1}.
We analyze the smoothness at the points in $S_1$ (similarly for $S_2,\ldots, S_5$). First, one can check using Bertini's Theorem that $S_1$ is smooth.  For $x\in S_1$, the Jacobian matrix of $F_1\ldots,F_n$ with respect to $\sigma_{1}^{(1)},\sigma_{2}^{(1)},\ldots,\sigma_{1}^{(n-2)},\sigma_{2}^{(n-2)},\tau_{1,2},\tau_{2,1},\tau_{2,2} $ at $x$ is of the form

\[
  J_{C_X}(x):=\begin{blockarray}{cccc|cc}
    & F_1&\cdots&F_{n-2}&F_{n-1}&F_n \\
    \begin{block}{c(ccc|cc)}
    \vdots &  \BAmulticolumn{3}{c|}{\multirow{3}{*}{$A(x)$}}&\BAmulticolumn{2}{c}{\multirow{3}{*}{$0$}}\\
    \frac{\partial}{\partial \sigma_{j_i}^{(i)}}&  &&&&\\
    \vdots &  &&&&\\
    \cline{1-6}
    \rule{0pt}{3ex}  
    \frac{\partial}{\partial \tau_{1,2}} &  \BAmulticolumn{3}{c|}{\multirow{3}{*}{$C(x)$}}&\BAmulticolumn{2}{c}{\multirow{3}{*}{$D(x)$}}\\
    \frac{\partial}{\partial \tau_{2,1}}  & &&&&\\
    \frac{\partial}{\partial \tau_{2,2}}  &  &&&&\\
    \end{block}
  \end{blockarray}\,\,\,.
\]
Then, for $x\in S_1$, the matrix $A(x)$ coincide with the Jacobian of $S_1$ at $x$. Thus, we conclude that $x\in S_1$ is a singular point of $C_X$ if and only if the rank of $D(x)$ is not maximal. A similar argument as in Proposition~\ref{prop: smooth loci n-1} shows that the intersection of $S_1$ with the variety defined by the $2\times 2$ minors of $D(x)$ is empty. Hence, we conclude that $C_X$ is smooth at $S_1$.\end{proof}
 
 \para

 \begin{remark}
By \cite[Remark 3.3]{BI}, the maximum number of totally mixed Nash equilibria for a generic $n$-player game with binary choices is not zero. It is in particular the number of derangements of the set $[n]$ \cite[Corollary 6.9]{CBMS}. Let $X$ be a generic game for which there exists a totally mixed Nash equilibrium. Since the Nash CI curve $C_X$ is smooth and contains totally mixed Nash equilibria of $X$, the real points of $C_X$ are Zariski dense (e.g.\ \cite[Theorem 5.1]{sottile16}). As a consequence of Theorem \ref{thm: irreducible Nash CI}, we deduce that for such a game, the intersection of the Nash CI curve with the open simplex is a smooth manifold of dimension $1$. 
 \end{remark}

\subsection{Affine universality}\label{sec: affine universality}
In the Nash case, the notion of universality asks whether every affine real algebraic variety is isomorphic to an affine open subset of the variety $\mathbb{V}(G_1,\ldots,G_n)$ for some payoff tables. In \cite[Theorem 1]{datta}, Datta gives a positive answer to this question. We deal with the analogous question for $C_X$. 

\para
 
Let $U_{X}$ be the intersection of $N_{X}$ with the principal open subset  $D(\tilde{\sigma}^{(1)}_2\cdots\tilde{\sigma}^{(n)}_2)\subset \left(\P^1\right)^n$, i.e.\ the open subset defined by $\tilde{\sigma}^{(i)}_2\neq 0$ for every $i$. 
In \cite{datta}, the notion of isomorphism used is the notion of stable isomorphism in the category of semialgebraic sets. However, we observe that in the proof of \cite[Theorem 6]{datta},
given any real algebraic variety $S$, the author constructs a game $X$ such that the affine open subset $U_X$ and $S$ have isomorphic coordinate rings. Therefore, \cite[Theorem 1,Theorem 6]{datta} can be rephrased as follows using the notion of isomorphism of algebraic varieties.

\para 

\begin{theorem}
Let $S\subset \mathbb{R}^m$ be a real affine algebraic variety. Then, there exists an $n$-player game with binary choices with payoff tables $X^{(1)},\ldots, X^{(n)}$ such that $U_{X}\simeq S$.
\end{theorem}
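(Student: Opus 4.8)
The plan is to peel off the homogeneous packaging, reduce to a concrete realization problem for multi-affine polynomials, and then close the loop using the divisor universality already available in the Nash setting. First I would work in the affine chart $D(\tilde{\sigma}^{(1)}_2\cdots\tilde{\sigma}^{(n)}_2)$ and set $x_k=\tilde{\sigma}^{(k)}_1/\tilde{\sigma}^{(k)}_2$, so that $U_{X}$ becomes the affine subvariety of $\mathbb{R}^n$ defined by the dehomogenizations $g_1,\ldots,g_n$ of the polynomials $G_1,\ldots,G_n$ in \eqref{eq:nash poly}. By inspection of \eqref{eq:nash poly}, each $g_i$ is \emph{multi-affine} --- of degree at most one in every variable --- and does not involve $x_i$. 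Conversely, the surjectivity of the linear map \eqref{eq:surj 1} (equivalently \cite[Corollary 6.7]{CBMS}) shows that \emph{every} tuple $(g_1,\ldots,g_n)$ of multi-affine polynomials with $g_i$ free of $x_i$ is realized as the Nash equations of some $n$-player game with binary choices. Thus the whole theorem reduces to the following algebraic problem: realize the given $S\subseteq\mathbb{R}^m$, up to isomorphism of coordinate rings, as the common zero set in some $\mathbb{R}^N$ of $N$ multi-affine polynomials $g_1,\ldots,g_N$ such that $g_i$ omits the variable $x_i$.

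The second step is a degree reduction. Starting from defining equations $f_1,\ldots,f_s$ of $S$, I would introduce one auxiliary coordinate $y$ and one \emph{bilinear} relation $y-x_ax_b=0$ for each product that occurs, substituting $y$ for $x_ax_b$ inside the $f_j$; since $y-x_ax_b$ has degree at most one in each of $x_a,x_b,y$ it is multi-affine, and iterating the substitution eventually turns every $f_j$ into a multi-affine polynomial in the enlarged variable set. Crucially, each auxiliary variable is introduced as the graph of a polynomial function of previously defined coordinates, so the defining relations are triangular and the projection forgetting the auxiliary coordinates restricts to an isomorphism from the enlarged variety onto $S$, with polynomial inverse. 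This is precisely why Datta's construction, although phrased as a stable isomorphism of semialgebraic sets, in fact produces isomorphic coordinate rings, as noted just before the statement.

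The third and hardest step is to force the multi-affine system into the rigid Nash format, where the equations are indexed by the players, equation $i$ must not involve $x_i$, and there is exactly one equation per variable. I would record, for each multi-affine equation, the finite set of variables it actually uses, and assign to that equation an index lying outside this set; whenever a product $x_ax_b$ with $a=b$ (or a variable for which no admissible index is available) forces a conflict, I would first duplicate the offending variable by a fresh copy $x'$ and an equality relation $x-x'=0$, using the copy to form the product and thereby creating the slack needed for the omission constraint. The delicate bookkeeping is to carry this out so that the assignment is simultaneously a bijection between equations and indices and so that no coordinate --- original, auxiliary, copy, or dummy --- is left unconstrained, since a free variable would spuriously add an $\mathbb{A}^1$ factor and destroy the isomorphism with $S$; this Hall-type matching, resolved in \cite{datta} by a uniform gadget, is the main obstacle, and I would follow the same bookkeeping. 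Once a valid assignment is fixed, each $g_i$ is multi-affine and free of $x_i$, hence realized by a game via the first step, its common zero set in the distinguished chart is the enlarged variety of the second step, and that variety is isomorphic to $S$; therefore $U_{X}\simeq S$.
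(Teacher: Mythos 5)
Your proposal is correct and takes essentially the same route as the paper: the paper's proof of this theorem consists precisely of invoking Datta's construction (\cite[Theorems 1 and 6]{datta}) and observing that it in fact yields isomorphic coordinate rings, which is the same observation you make at the end of your second step. The ingredients you spell out --- realizability of any multi-affine tuple with $g_i$ free of $x_i$ via the surjectivity of \eqref{eq:surj 1}, degree reduction through triangular graph relations, and the equation-to-player matching --- are exactly the internals of Datta's argument, and your deferral of the matching gadget to \cite{datta} is consistent with the paper, which defers the entire construction there.
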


\para 

Based on the previous theorem, in order to derive analogous results for $C_X$, we  study the relation between $C_X$ and $N_{X}$. Let $W_X$ be the affine open subset of $C_X$ defined as the intersection of $C_X$ with the principal open subset $D(\sigma^{(1)}_2\cdots\sigma^{(n-2)}_2\tau_{2,2})\subseteq \Mc$, i.e.\ the open subset defined by $\sigma^{(1)}_2,\ldots,\sigma^{(n-2)}_2, \tau_{2,2}\neq 0$. Then, we state the following result.

\para 

\begin{proposition}\label{prop:univ 1}
For every $n$-player game with binary choices with payoff tables $\tilde{X}^{(1)},\ldots, \tilde{X}^{(n)}$, there exists an $n+2$-player game with binary choices with payoff tables $X^{(1)},\ldots,X^{(n+2)}$ such that $$W_X\simeq U_{\tilde{X}}\times \mathbb{R}^1.$$
\end{proposition}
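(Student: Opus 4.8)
The plan is to build, from the given $n$-player game $\tilde X$, an $(n+2)$-player game $X$ on the Segre variety $\Mc=\left(\P^1\right)^{n}\times\P^3$ whose defining polynomials $F_1,\dots,F_{n+2}$ decouple on the principal open set $W_X=C_X\cap D(\sigma^{(1)}_2\cdots\sigma^{(n)}_2\tau_{2,2})$. The goal is that, after passing to the affine chart $\sigma^{(k)}_2=1$ $(k\le n)$ and $\tau_{2,2}=1$, the first $n$ equations reproduce the Nash equations $G_1,\dots,G_n$ of $\tilde X$ (contributing the factor $U_{\tilde X}$), while the last two equations factor with a unit $\sigma$-prefactor times quadrics in $\tau$ that cut out a copy of $\mathbb{A}^1$. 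Thus $W_X$ will be literally a product $\{G_1=\cdots=G_n=0\}\times\{q_1=q_2=0\}$ in variables that do not interact.

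First I would treat $F_1,\dots,F_n$. Since $\phi_i$ is surjective for $i\le n$ (this is the $i\le (n+2)-2$ case in the proof of Proposition~\ref{cor: universality of divisors}, exactly as in the Nash case), I can choose payoff tables $X^{(1)},\dots,X^{(n)}$ realizing $F_i=G_i\cdot\tau_{2,2}$, where $G_i$ is the Nash polynomial \eqref{eq:nash poly} of $\tilde X$ read in the variables $\sigma^{(1)},\dots,\sigma^{(n)}$; this lies in $V_i$ since it has multidegree $(1,\dots,\underset{(i)}{0},\dots,1,1)$. Restricting to the chart $\tau_{2,2}=1$ gives $F_i=G_i$, so under the identification $\sigma^{(k)}_1\leftrightarrow\tilde\sigma^{(k)}_1$ the first $n$ equations cut out exactly $U_{\tilde X}$ in the $\sigma$-coordinates.

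Next I would treat $F_{n+1},F_{n+2}$. Using the tensor description $\Lambda_{n+1}=W_{n+1}\otimes\left(H^0(\P^1,\O_{\P^1}(1))\right)^{\otimes n}$ from the proof of Proposition~\ref{cor: universality of divisors}, I can choose $X^{(n+1)}$ whose only nonzero $W_{n+1}$-component sits on the monomial $\sigma^{(1)}_2\cdots\sigma^{(n)}_2$, forcing the factorization $F_{n+1}=\left(\sigma^{(1)}_2\cdots\sigma^{(n)}_2\right)q_1$ with $q_1$ a quadric in $\tau$ of the form \eqref{eq: Fn-1}, and similarly $F_{n+2}=\left(\sigma^{(1)}_2\cdots\sigma^{(n)}_2\right)q_2$ with $q_2$ of the form \eqref{eq: Fn}. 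The prefactor is a unit on $W_X$, so there the last two equations reduce to $q_1=q_2=0$ in the $\tau$-coordinates alone; the extra components $\{\sigma^{(k)}_2=0\}$ of $\mathbb{V}(F_{n+1})$ and $\mathbb{V}(F_{n+2})$ do not meet $W_X$. The crux is to choose the coefficients, subject to the constraints $D=B+C-A$ and $D'=B'+C'-A'$, so that $\{q_1=q_2=0\}$ in the chart $\tau_{2,2}=1$ is isomorphic to $\mathbb{A}^1$; I expect this to be the main obstacle, but a direct choice works. Taking $q_1=\tau_{1,1}\tau_{2,1}-\tau_{1,2}\tau_{2,2}$ (so $A=1$, $B=C=0$, $D=-1$) and $q_2=\tau_{2,2}(\tau_{1,1}+\tau_{2,1})$ (so $A'=B'=0$, $C'=D'=1$), the chart equations $\tau_{1,1}\tau_{2,1}=\tau_{1,2}$ and $\tau_{2,1}=-\tau_{1,1}$ admit the injective polynomial parametrization $t\mapsto(\tau_{1,1},\tau_{1,2},\tau_{2,1})=(t,-t^2,-t)$, with inverse the projection to $\tau_{1,1}$, exhibiting $\{q_1=q_2=0\}\cong\mathbb{A}^1$.

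Finally, combining the two steps, on the chart the ideal of $W_X$ is generated by the $\sigma$-system $\{G_1,\dots,G_n\}$ and the $\tau$-system $\{q_1,q_2\}$, which involve disjoint sets of variables; hence $W_X\cong U_{\tilde X}\times\left(\{q_1=q_2=0\}\cap\{\tau_{2,2}=1\}\right)\cong U_{\tilde X}\times\mathbb{R}^1$, as claimed. The remaining verifications are routine: that the prescribed $F_i$ are genuinely realized by real payoff tables (surjectivity of $\phi_i$ onto $V_i$ for $i\le n$ and onto $\Lambda_{n+1},\Lambda_{n+2}$), and that the removal of components lying in the special coordinate hyperplanes leaves $W_X$ untouched.
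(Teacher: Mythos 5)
Your proposal is correct and takes essentially the same route as the paper's own proof: the paper likewise chooses the payoff tables of the first $n$ players so that $F_i$ becomes the Nash polynomial of $\tilde X$ times $\tau_{2,2}$ (via exactly the zero pattern implicit in your surjectivity argument), and chooses $X^{(n+1)},X^{(n+2)}$ supported on the monomial $\sigma_2^{(1)}\cdots\sigma_2^{(n)}$ so that the last two equations decouple into quadrics in $\tau$ subject to the constraint $D=B+C-A$. The only deviation is cosmetic: the paper picks both quadrics so that they dehomogenize to the linear forms $\tau_{1,1}+\tau_{1,2}$ and $\tau_{1,1}+\tau_{2,1}$, making the $\tau$-factor visibly a line, whereas your choice $\tau_{1,1}\tau_{2,1}-\tau_{1,2}\tau_{2,2}$ leaves one chart equation quadratic and requires your (correct) additional observation that the graph $t\mapsto(t,-t^2,-t)$ is isomorphic to $\mathbb{R}^1$.
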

\begin{proof}
For every $1\leq i\leq n$, let $\tilde{G}_i\in\mathbb{R}[\tilde{\sigma}_1^{(1)},\ldots,\tilde{\sigma}_1^{(n)}]$ be the polynomial resulting from evaluating $\tilde{\sigma}_2^{(1)},\ldots, \tilde{\sigma}_2^{(n)}$ by $1$ in $G_i$. Then, 
  $U_{\tilde{X}}$ is an affine algebraic subvariety in $\mathbb{R}^n$ defined by $\tilde{G}_1,\ldots,\tilde{G}_n$. Analogously, we consider the variety $C_X=\mathbb{V}(F_1,\ldots,F_{n+2})$ arising from an $n+2$-player game. For every $i$, let $\tilde{F}_i\in\mathbb{R}[\sigma_1^{(1)},\ldots,\sigma_{1}^{(n)},\tau_{1,1},\tau_{1,2},\tau_{2,1}]$ be the polynomial obtained after evaluating  $\sigma_2^{(1)},\ldots,\sigma_{2}^{(n-1)},\tau_{2,2}$ by $1$ in $F_i$. Then, $W_X$ is the subvariety of $\mathbb{R}^{n+2}$ defined by  $\tilde{F}_1,\ldots,\tilde{F}_{n+2}$.

  Now, the idea is to fix payoff tables $X^{(1)},\ldots, X^{(n)}$ such that  $\tilde{F}_1,\ldots,\tilde{F}_{n}$ are $\tilde{G}_1,\ldots,\tilde{G}_{n}$ 
  when substituting $\sigma_1^{(i)}$ by 
  $\tilde{\sigma}_1^{(i)}$ for $i\leq n$.

  First of all, for $i\leq n$ and $j_1,\ldots,j_{n}\in[2]$, we  set
  \[
  X^{(i)}_{j_1\cdots j_{n}11} = X^{(i)}_{j_1\cdots j_{n}12} = X^{(i)}_{j_1\cdots j_{n}21} = 0,
  \]
  and we get that for $i\leq n$ 
  \[
  F_i = \displaystyle\sum_{j_1,\ldots,\widehat{j_i},\ldots,j_{n}}A^{(i)}_{j_1\cdots j_{n}}\sigma^{(1)}_{j_1}\cdots \widehat{\sigma^{(i)}_{j_{i}}}\cdots \sigma^{(n)}_{j_{n}}\tau_{2,2},
  \]
  where  $A^{(i)}_{j_1\cdots j_{n-1}}=X^{(i)}_{j_1\cdots2\cdots j_{n-1}22}- X^{(i)}_{j_1\cdots1\cdots j_{n-1}22}$.
  Then, substituting $\sigma_1^{(i)}$ by $\tilde{\sigma}_1^{(i)}$ in $\tilde{F}_i$, we get a polynomial of the same format as $G_i$. Therefore, we can fix the coefficients $A^{(i)}_{j_1\cdots j_{n}}$ such that this substitution is equal to $\tilde{G}_i$.
  Thus, we obtain that $\mathbb{V}(\tilde{F}_1,\ldots,\tilde{F}_{n})\simeq \mathbb{V}(\tilde{G}_1,\ldots,\tilde{G}_{n})\times \mathbb{R}^3$, where the factor $\mathbb{R}^3$ arises from the factor $\P^3$ of $\Mc$. Note that if  $\tilde{F}_{n+1}$ and $\tilde{F}_{n+2}$ can be chosen as linear forms on $\tau_{1,1},\tau_{1,2}$, and $\tau_{2,1}$, then 
  \[
  \mathbb{V}(\tilde{F}_1,\ldots,\tilde{F}_{n+2})\simeq \mathbb{V}(\tilde{G}_1,\ldots,\tilde{G}_{n})\times \mathbb{V}(\tilde{F}_{n+1},\tilde{F}_{n+2})\simeq \mathbb{V}(\tilde{G}_1,\ldots,\tilde{G}_{n})\times \mathbb{R}^1.
  \]
  
  Thus, our next step is to fix  $X^{(n+1)}$ and $X^{(n+2)}$ such that $\tilde{F}_{n+1}=\tau_{1,1}+\tau_{1,2}$ and $\tilde{F}_{n+2}=\tau_{1,1}+\tau_{2,1}$.  To do so, for every $j_1,\ldots,j_{n+2}$ we set $X_{j_1 \cdots j_{n+2}}^{(n+1)}= X_{j_1\cdots j_{n+2}}^{(n+2)}=0$ if $j_k = 1$ for some $k\leq n$. Then, we get that
  \[
  F_{n+1} = \sigma_{2}^{(1)}\cdots\sigma_{2}^{(n)}
  \left(
  A^{(n+1)}\tau_{1,1}\tau_{2,1}+
    B^{(n+1)}\tau_{1,2}\tau_{2,1}+
      C^{(n+1)}\tau_{1,1}\tau_{2,2}+
      D^{(n+1)}\tau_{1,2}\tau_{2,2}
  \right),
  \]
  where the coefficients $A^{(n+1)},B^{(n+1)},C^{(n+1)}D^{(n+1)}$ are defined as
  \[\begin{array}{ll}
  A^{(n+1)} = X^{(n+1)}_{2\cdots221}-X^{(n+1)}_{2\cdots211}, &
  B^{(n+1)} = X^{(n+1)}_{2\cdots221}-X^{(n+1)}_{2\cdots212},\\  \noalign{\vspace*{1mm}}
  C^{(n+1)} = X^{(n+1)}_{2\cdots222}-X^{(n+1)}_{2\cdots211},& D^{(n+1)} =B^{(n+1)} +C^{(n+1)} -A^{(n+1)}.\end{array} \] Moreover, fixing $A^{(n+1)} =C^{(n+1)} = 0$ and $B^{(n+1)} = 1$, we obtain that 
  \[
  F_{n+1} = \sigma_{2}^{(1)}\cdots\sigma_{2}^{(n)}\tau_{2,2}
  \left(
  \tau_{1,1}+\tau_{1,2}
  \right)
  \]
   and, hence, $\tilde{F}_{n+1}=\tau_{1,1}+\tau_{1,2}$. Similarly, it follows for $F_{n+2}$.
\end{proof}

\para 

Applying the previous proposition in combination with \cite[Theorem 1]{datta}, we derive the following corollary.

\para 

\begin{corollary}\label{co:affine univ 1}
    Let $S$ be a real affine algebraic variety. Then there exists a game with binary choices such that the affine open subset $W_X$ of $C_X$ is isomorphic to $S\times\mathbb{R}^1.$
\end{corollary}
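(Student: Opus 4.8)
The plan is to chain the two results that immediately precede the corollary, since the isomorphism $W_X \simeq U_{\tilde X} \times \mathbb{R}^1$ is exactly the bridge supplied by Proposition~\ref{prop:univ 1}, while $U_{\tilde X} \simeq S$ is the content of the rephrased version of \cite[Theorem 1]{datta} stated just above. Concretely, I would begin with an arbitrary real affine algebraic variety $S \subseteq \mathbb{R}^m$ and first invoke the preceding theorem: it produces an integer $n$ and an $n$-player game with binary choices, with payoff tables $\tilde X^{(1)}, \ldots, \tilde X^{(n)}$, such that the affine open subset $U_{\tilde X}$ of $N_{\tilde X}$ is isomorphic to $S$ as an algebraic variety.

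Next, I would feed this game $\tilde X$ into Proposition~\ref{prop:univ 1}. This yields an $(n+2)$-player game with binary choices, with payoff tables $X^{(1)}, \ldots, X^{(n+2)}$, whose associated affine open subset $W_X$ of $C_X$ satisfies $W_X \simeq U_{\tilde X} \times \mathbb{R}^1$. Composing the two isomorphisms then gives $W_X \simeq U_{\tilde X} \times \mathbb{R}^1 \simeq S \times \mathbb{R}^1$, which is precisely the claim; the game realizing it is the $(n+2)$-player game constructed in Proposition~\ref{prop:univ 1}.

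The only point requiring a moment of care is that both isomorphisms must live in the same category, so that they can be composed and so that the factor $\mathbb{R}^1$ is preserved under the composition. This is exactly why the paper rephrases Datta's result (originally phrased via stable isomorphism of semialgebraic sets) as an isomorphism of affine algebraic varieties: with that formulation, $U_{\tilde X} \simeq S$ and $W_X \simeq U_{\tilde X} \times \mathbb{R}^1$ are both isomorphisms of affine varieties, and taking the product of the first with the identity on $\mathbb{R}^1$ lets the composition go through cleanly. I do not expect any genuine obstacle here — the substantive work has already been carried out inside Proposition~\ref{prop:univ 1}, where the payoff tables of the $(n+2)$-player game are fixed so that $\tilde F_1, \ldots, \tilde F_n$ reproduce $\tilde G_1, \ldots, \tilde G_n$ and $\tilde F_{n+1}, \tilde F_{n+2}$ become the linear forms $\tau_{1,1}+\tau_{1,2}$ and $\tau_{1,1}+\tau_{2,1}$ that split off the extra $\mathbb{R}^1$ factor.
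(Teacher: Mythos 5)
Your proposal is correct and follows exactly the paper's own argument: the paper derives the corollary by applying Proposition~\ref{prop:univ 1} to the game furnished by the rephrased version of Datta's theorem, composing $W_X \simeq U_{\tilde X} \times \mathbb{R}^1$ with $U_{\tilde X} \simeq S$. Your remark about both isomorphisms living in the category of affine algebraic varieties is precisely the point of the paper's rephrasing of \cite[Theorem 1]{datta}, so nothing is missing.
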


\para 

\begin{remark}
    As a consequence of Corollary \ref{co:affine univ 1}, we deduce that for any singularity type, there exists a game with binary choices such that $C_X$ has that singularity type. This is known as the Murphy's law. Therefore, we deduce that the space of all varieties $C_X$ for binary games with any number of players satisfies the Murphy's Law. For further reading on the Murphy's law in algebraic geometry see \cite{Vakil}.
\end{remark}

\para 

The strategy for Corollary~\ref{co:affine univ 1} was
 to solve the dimension problem of the universality theorem for $C_X$ by adding one to the dimension of the real algebraic variety. Our second approach is to work with real algebraic varieties of dimension at least one. 
 More concretely, we focus on varieties that are cut out by fewer equations than the  dimension of the ambient space. In this situation, the dimension cannot drop to zero.  
 In  the proof of \cite[Theorem 6]{datta}, the author exploits the structure of the polynomials $G_i$ in equation \eqref{eq:nash poly} defining the variety $N_X$. In our setting, 
 the polynomials $F_1,\ldots,F_n$
 defining $C_X$ are slightly different to the ones defining $N_X$ due to 
 the factor $\P^3$ of the Segre variety $\mathcal{M}_\mathcal{C}$. In the following result, we slightly modify the proof of \cite[Theorem 6]{datta} to be compatible with our system of equations defining $C_X$.

\para 

\begin{theorem}\label{thm: universality of Spohn CI}
Let $S\subseteq \mathbb{R}^n$ be a real affine algebraic variety defined by  $G_1,\ldots,G_m\in\mathbb{R}[x_1,\ldots,x_n]$ with $m<n$. For every $i\in\{1,\ldots,n\}$, let $\delta_{i}$
be 
the maximum of the degrees of $x_i$ in $G_1,\ldots,G_m$.
Then, there exists a  $(\delta+n+1)-$player game with binary choices such that the affine open subset $W_X$ of $C_X$ is isomorphic to $S$, where $\delta = \delta_1+\cdots +\delta_n$.
\end{theorem}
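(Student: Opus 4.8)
The plan is to follow the strategy of Datta's proof of \cite[Theorem 6]{datta}, adapting it to the equations $F_1,\dots,F_N$ that cut out $C_X$ and, crucially, exploiting the exceptional equations $F_{N-1},F_N$ coming from the $\P^3$ factor of $\Mc$ so as to remove the spurious factor $\mathbb{R}^1$ produced in Proposition~\ref{prop:univ 1} and Corollary~\ref{co:affine univ 1}. The hypothesis $m<n$ will enter precisely as the condition guaranteeing that there are enough equations to eliminate that extra factor, and I will write $N=\delta+n+1$ for the number of players of the game to be constructed.

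First I would linearize the system defining $S$. After the substitutions defining $W_X$ (setting $\sigma^{(k)}_2=1$ and $\tau_{2,2}=1$), the equations $\tilde F_i$ with $i\le N-2$ become multilinear polynomials in the variables $\sigma^{(1)}_1,\dots,\sigma^{(N-2)}_1$, so I must first rewrite $G_1,\dots,G_m$ as an equivalent multilinear system. Exactly as in \cite{datta}, for each $x_i$ I introduce $\delta_i-1$ auxiliary variables together with the linear binding relations identifying them with $x_i$, and I replace each power $x_i^k$ by a product of distinct copies. This yields a multilinear system in $\delta=\delta_1+\cdots+\delta_n$ variables whose zero locus is isomorphic to $S$, the binding relations slaving the auxiliaries to the original coordinates. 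Invoking the surjectivity of $\phi_i$ for $i\le N-2$ (established in the proof of Proposition~\ref{cor: universality of divisors}), together with Datta's device of adding auxiliary players so that each equation can be assigned an index not among its variables, I can choose the payoff tables of players $1,\dots,N-2$ — as in Proposition~\ref{prop:univ 1}, killing all monomials of $F_i$ in $\tau_{1,1},\tau_{1,2},\tau_{2,1}$ — so that the forms $\tilde F_1,\dots,\tilde F_{N-2}$ realize the multilinear equations, the binding relations, and the slaving of the unused $\sigma$-coordinates to constants. The bookkeeping of these variables and auxiliary players is what fixes the count $N=\delta+n+1$.

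The heart of the argument, and the step I expect to be the main obstacle, is the treatment of the $\P^3$ coordinates $\tau_{1,1},\tau_{1,2},\tau_{2,1}$. In Proposition~\ref{prop:univ 1} the two exceptional equations $\tilde F_{N-1},\tilde F_N$ were chosen as independent linear forms in these three variables, whose common zero locus is a line; this is exactly the origin of the extra factor $\mathbb{R}^1$ in Corollary~\ref{co:affine univ 1}. To obtain $W_X\simeq S$ on the nose I must cut the $\tau$-block down by one more dimension, which cannot be done with $\tilde F_{N-1},\tilde F_N$ alone. The idea is to repurpose one of the forms $\tilde F_{i_0}$ with $i_0\le N-2$: rather than killing all of its $\tau$-dependence, I retain its $\tau_{1,1}$ and $\tau_{2,2}$ monomials, so that on $W_X$ the equation $\tilde F_{i_0}=0$ expresses $\tau_{1,1}$ as a regular function of the $\sigma$-coordinates and thereby, together with $\tilde F_{N-1},\tilde F_N$, slaves all three $\tau$-variables. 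Counting effective equations shows this is possible exactly when $N-2=\delta+n-1\ge m+\delta$, i.e.\ $m<n$; this is the precise point at which the hypothesis is used, and the leftover $n-m-1$ equations are then made identically zero by a suitable choice of payoffs.

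The technical crux will be to verify that the repurposed and exceptional equations can be chosen compatibly despite the rigid shape of $F_{N-1},F_N$ — whose coefficients are forced to satisfy the relations $D=B+C-A$ inherited from the determinantal Spohn matrices — and that the resulting description of $\tau_{1,1},\tau_{1,2},\tau_{2,1}$ is by honest regular functions on the open set $W_X$, so that the map $W_X\to S$ is an isomorphism of affine varieties and not merely a bijection on real points. I would also need to check that the resulting (non-generic) game does not introduce components of $C_X$ lying in the removed hyperplanes $\{p_{j_1\cdots j_N}=0\}$ or $\{p_{+\cdots+}=0\}$, so that $W_X$ is genuinely the affine open subset of $C_X$ under consideration.
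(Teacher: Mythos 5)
Your proposal is correct in substance and shares the skeleton of the paper's proof: multilinearize $G_1,\ldots,G_m$ by auxiliary players bound to powers of the variables, realize the multilinear equations through the maps $\phi_i$ for $i\le N-2$, reserve the two rigid equations $\tilde F_{N-1},\tilde F_N$ for linear forms in the $\tau$-block as in Proposition~\ref{prop:univ 1}, and use $m<n$ precisely to guarantee that these two equations are never needed to encode the system. The genuine divergence is at the step you call the heart of the argument. You kill the leftover $\tau$-dimension by repurposing one multilinear equation, $\tilde F_{i_0}=\tau_{1,1}-h(\sigma)$, so that the $\P^3$ factor contributes nothing and all of $x_1,\ldots,x_n$ are represented by $\sigma$-coordinates. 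The paper does the opposite: it leaves $\tau_{1,1}$ free and uses it as the image of $x_n$, binding the powers of $x_n$ by relations $\sigma_1^{(n,j)}=\tau_{1,1}^{\,j}$ and slaving $\tau_{1,2},\tau_{2,1}$ to $-\tau_{1,1}$ via the two exceptional equations; the inverse of the coordinate-ring isomorphism sends $\tau_{1,1}\mapsto x_n$ and $\tau_{1,2},\tau_{2,1}\mapsto -x_n$. Both variants fit in the same budget of $\delta+n+1$ equations and invoke $m<n$ at the same place, but the paper's choice makes the isomorphism immediate and avoids the two verifications you flag: there is no slaving equation whose leading coefficient must be a unit (one simply takes the coefficient of $\tau_{1,1}$ to be the all-$\sigma_2$ monomial in your version, so this is fixable, but it is extra work), and the constraint $D=B+C-A$ on $\tilde F_{N-1},\tilde F_N$ is handled exactly as in Proposition~\ref{prop:univ 1}, which already exhibits payoffs making these equal to $\tau_{1,1}+\tau_{1,2}$ and $\tau_{1,1}+\tau_{2,1}$. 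Your last caveat, about components falling into the removed hyperplanes, applies equally to the paper's construction (e.g.\ when $S\subseteq\{x_n=0\}$) and is left implicit there, so it is not a defect of your route specifically. One caution on wording: the relabelling ensuring that $\sigma_1^{(i,j)}$ does not appear in $\tilde F_{i,j}$ is, in \cite{datta} and in the paper, a permutation argument within the fixed set of $\delta+n+1$ players; it does not add players, which matters because the player count is part of the statement being proved.
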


\begin{proof}
First of all, note that for a $(\delta+n+1)-$player game, $C_X\subset \left(\P^1\right)^{\delta}\times\left(\P^1\right)^{n-1}\times \P^3$. We denote the first $\delta$ variables by $\sigma_1^{(i,j_i)}$ for $i\in\{1,\ldots,n\}$ and $j_i\in \{1,\ldots , \delta_i\}$. Let $\sigma_1^{(i)}$ for $i\in\{1,\ldots,n-1\}$ and $\tau_{1,1},\tau_{1,2},\tau_{2,1},\tau_{2,2}$ be the variables of the rest factors of the Segre variety. Since we are restricting our study to the affine open subset $W_X$, we can assume that $\sigma_2^{(i,j)},\sigma_2^{(k)},$ and $\tau_{2,2}$ are $1$.  Let $\tilde{F}_{1,1},\ldots,\tilde{F}_{n,\delta_n},\tilde{F}_1,\ldots,\tilde{F}_{n+1}$  be the polynomials defining $W_X$. 
	Now, we fix the payoff tables of the game such that the first $\delta$ generators of $W_X$ are
	\[
	\begin{array}{cccc}
	\tilde{F}_{1,1}=\sigma_1^{(1,1)}-\sigma_1^{(1)}, & \tilde{F}_{1,2}=\sigma_1^{(1,2)}-\sigma_1^{(1)} \sigma_1^{(1,1)}, & \cdots &, \tilde{F}_{1,\delta_1}=\sigma_1^{(1,\delta_1)}-\sigma_1^{(1)} \sigma_1^{(1,\delta_1-1)},\\
	\vdots&\vdots & &\vdots\\
	\tilde{F}_{n,1}=\sigma_1^{(n,1)}-\tau_{1,1}, & \tilde{F}_{n,2}=\sigma_1^{(n,2)}-\tau_{1,1} \sigma_1^{(n,1)}, & \cdots &, \tilde{F}_{n,\delta_n}=\sigma_1^{(n,\delta_n)}-\tau_{1,1} \sigma_1^{(n,\delta_n-1)}.
	\end{array}
	\]
	This implies that in the coordinate ring of $W_X$, we have that $\sigma_1^{(i,j_i)}=(\sigma_1^{(i)})^{\,j_i}$ for $1\leq i\leq n-1$ and  $\sigma_1^{(n,j_i)} = \tau_{1,1}^{j_i}$. 
	In addition, for $i\leq m$, we set $\tilde{F}_i$ to be the polynomial obtained from replacing $x_i^k$ by $\sigma_1^{(i,k)}$ in $G_i$. Since $m<n$, $\tilde{F}_n$ and $\tilde{F}_{n+1}$ are not among the generators we have already set up. 
 Note that the main variations of this proof from the one of \cite[Theorem 6]{datta} arise 
 while dealing with the polynomials $\tilde{F}_n$ and $\tilde{F}_{n+1}$ and the variables $\tau_{i,j}$. 
 As in the proof of Proposition~\ref{prop:univ 1}, we set  $\tilde{F}_n=\tau_{1,1}+\tau_{1,2}$ and $\tilde{F}_{n+1}=\tau_{1,1}+\tau_{2,1}$. Finally, we fix  $\tilde{F}_{k}=0$ for $m<k< n$.
	 Now that we have fixed the ideal of $W_X$, we construct an isomorphism between  the coordinate rings of $S$ and $W_X$, denoted by $R_1$ and $R_2$ respectively. We consider the ring map from $R_1$ to $R_2$ that sends $x_1,\ldots,x_n$ to $\sigma_1^{(1)},\ldots,\sigma_1^{(n-1)},\tau_{1,1}$ respectively. Then, by construction, $G_i$ is mapped to $\tilde{F}_i$, and thus, the map is well--defined. Moreover, one can check that this map has an inverse map that  sends $\sigma_1^{(i,j_i)}$ to $x_i^{j_i}$, $\tau_{1,1}$ to $x_n$, and $\tau_{1,2}$ and $\tau_{2,1}$ to $-x_n$. Therefore, we conclude that
 $W_X =\mathbb{V}(\tilde{F}_{1,1},\ldots,\tilde{F}_{n,\delta_n},\tilde{F}_{1},\ldots,\tilde{F}_{n})$ is isomorphic to $S$. Finally, we need to ensure that $W_X$ is an affine open subset of the Spohn CI variety of a game. We observe that the variable $\sigma_1^{(i,j)}$ appears in the polynomial $\tilde{F}_{i,j}$, which should not happen if $W_X$ is constructed from a game. To solve this issue, we need to relabel the polynomials $\tilde{F}_{1,1},\ldots, \tilde{F}_{n,\delta_n}$ such that  $\sigma_1^{(i,j)}$ does not appear in the polynomial $\tilde{F}_{i,j}$.
		  The existence of such relabelling is proven using the same argument as in the proof of \cite[Theorem 6]{datta}. 
\end{proof}

\begin{remark}
Given a real affine algebraic variety $S\subseteq \mathbb{R}^n$ defined by  $G_1,\ldots,G_m\in\mathbb{R}[x_1,\ldots,x_n]$ with $m<n$,
 Theorem~\ref{thm: universality of Spohn CI} provides an effective method of computing a game $X$ such that $U_{X}\simeq S$. 
\end{remark}

\para

\begin{example}
Consider the real plane curve defined by $G_1= x_1^2+x_2^2-1$. By Theorem~\ref{thm: universality of Spohn CI}, this plane curve can be described by a 7-player game with binary choices. Consider the polynomials
\[\begin{array}{l}

F_{1} =\sigma_2^{(2)}\sigma_2^{(4)}\sigma_2^{(5)}\left( \sigma_1^{(3)}\tau_{2,2}-\tau_{1,1}\sigma_2^{(3)}\right),\\

F_{2} =\sigma_2^{(1)}\sigma_2^{(5)}\left( \sigma_2^{(3)}\sigma_1^{(4)}\tau_{2,2}-\sigma_1^{(3)}\sigma_2^{(4)}\tau_{1,1}\right),\\

F_{3} =\sigma_2^{(2)}\sigma_2^{(4)}\tau_{2,2}\left( \sigma_1^{(1)}\sigma_2^{(5)}-\sigma_1^{(5)}\sigma_2^{(1)}\right),\\

F_{4} =\sigma_2^{(3)}\tau_{2,2}\left( \sigma_2^{(1)}\sigma_1^{(2)}\sigma_2^{(5)}-\sigma_1^{(1)}\sigma_2^{(2)}\sigma_1^{(5)}\right),\\

F_5 = \sigma_2^{(1)}\sigma_2^{(3)}\tau_{2,2}\left(\sigma_2^{(4)}\sigma_1^{(2)}+\sigma_2^{(2)}\sigma_1^{(4)}-\sigma_2^{(2)}\sigma_2^{(4)}\right),\\

F_6 =\sigma_2^{(1)}\sigma_2^{(2)}\sigma_2^{(3)}\sigma_2^{(4)}\sigma_2^{(5)}\tau_{2,2}( \tau_{1,1}+\tau_{1,2}),\\

F_7 =\sigma_2^{(1)}\sigma_2^{(2)}\sigma_2^{(3)}\sigma_2^{(4)}\sigma_2^{(5)}\tau_{2,2}( \tau_{1,1}+\tau_{2,1}).

\end{array}
\]
One can check that the open subset of $\mathbb{V}(F_1,\ldots,F_7)$ defined by $\sigma_2^{(1)}\cdots\sigma_2^{(5)}\tau_{2,2}\neq 0 $ is isomorphic to $\mathbb{V}(G_1)$. Moreover, the tuple $(F_1,\ldots,F_7)$ lies in the image of the linear map \eqref{eq:surj 2}. Computing its preimage, one can compute a linear subspace of games $X$ such that $C_X$ is defined by the equations $F_{1},\ldots,F_7$  and the open subset $U_X$ is isomorphic to $\mathbb{V}(G_1)$. For example, the possible payoff tables corresponding to the players $1$, $5$, $6$, and $7$ are 
\[
\begin{alignedat}{2}
X^{(1)}_{i_{1}\cdots i_7 } &= \left\{ \begin{array}{cl}
1 &   \text{if } (i_{1},\ldots,i_7)\in\{(2,2,1,2,2,2,2),(1,2,2,2,1,1,1)\},
\\
0&\text{else.}
\end{array}\right.
\\
X^{(5)}_{i_{1}\cdots i_7 } &= \left\{ 
\begin{array}{cl}
\!1 &   \text{if } (i_{1},\ldots,i_7)\in\{(2,1,2,2,2,2,2),(2,2,2,1,2,2,2),(2,2,2,2,1,2,2)\},
\\
\!0&\text{else.}
\end{array}\right.
\\
X^{(6)}_{i_{1}\cdots i_7 } &=X^{(7)}_{i_{1}\cdots i_7 } = \left\{ 
\begin{array}{cl}
1 &   \text{if } (i_{1},\ldots,i_7)\in\{(2,2,2,2,2,2,2)\},
\\
0&\text{else.}
\end{array}\right. 
\end{alignedat} 
\]

\end{example}

\begin{remark}
    In \cite{datta}, Datta's universality theorem refers to  the set of totally mixed Nash equilibria. An analogous statement for the set of totally mixed CI equilibria can be obtained in our setting. Namely, given a real affine algebraic variety $S$, there exists a game with binary choices such that $C_X\cap \Delta $ is isomorphic to $S\times \mathbb{R}^1$ (Corollary \ref{co:affine univ 1}). As in \cite{datta}, here we use the notion of stable isomorphism in the category of semialgebraic sets. To derive these results one should argue as in \cite{datta}: the set of real points of a real affine algebraic variety is isomorphic to the set of real points of a real affine algebraic variety whose real points are contained in the probability simplex. Now, assuming the latter, the statement follows from  Proposition \ref{prop:univ 1}.
     An analogous statement also holds for Theorem \ref{thm: universality of Spohn CI}.
\end{remark}

\subsection*{Acknowledgements}
We are grateful to Daniele Agostini and Bernd Sturmfels for many helpful and inspiring discussions on the topic. Javier Sendra--Arranz received the support of a fellowship from the "la Caixa" Foundation (ID 100010434). The fellowship code is LCF/BQ/EU21/11890110.

\addcontentsline{toc}{section}{References}\label{sec:references}
\bibliographystyle{plain}

\begin{thebibliography}{99}



\bibitem{aumann1}
R. J. Aumann. {Subjectivity and correlation in randomized strategies}. {\em Journal of Mathematical Economics}, 1(1):67–96, 1974.

\bibitem{datta} R.~Datta. Universality of Nash equilibria.
{\em Math.~of Operations Research} {\bf 28} (2003), 424--432.

\bibitem{RHN} S. Di Rocco, C. Haase, B. Nill. Discrete mixed volume and Hodge-Deligne numbers. {\em Advances in Applied Mathematics}, vol. 104 (2019), s. 1-13.
\bibitem{EH} D. Eisenbud, J. Harris. 3264 and All That: A Second Course in Algebraic Geometry. Cambridge: Cambridge University Press. (2016)

\bibitem{khovanskii} A. Khovanskii.  Newton polyhedra and the genus of complete intersections. {\em Funkcional. Anal. i Prilozen.} 12(1) (1978), 51–61.

\bibitem{M2}
D.~Grayson and M.~Stillman.
{\em Macaulay2, a software system for research in algebraic geometry},
available at \url{http://www.math.uiuc.edu/Macaulay2/}.

\bibitem{nash51}
J. Nash. Non-Cooperative Games. {\em Annals of Mathematics}, 54(2) (1951), 286–295.


\bibitem{BI} I. Portakal and B. Sturmfels. Geometry of Dependency Equilibria. {\em Rendiconti dell'Istituto di Matematica dell'Università di Trieste}, Vol. 54, Art. No. 5, (2022).

\bibitem{sottile16} F. Sottile.  Real Algebraic Geometry for Geometric Constraints. {\em arXiv:1606.03127}, (2016).

\bibitem{spohn2003} W.~Spohn. Dependency equilibria and the causal structure of decision and 
game stituations. {\em Homo Oeconomicus} {\bf 20} (2003), 195--255.

\bibitem{SRR23} W.~Spohn, M. Radzvilas, and 
G. Rothfus. Dependency equilibria: Extending Nash equilibria to entangled belief systems. {\em Unpublished manuscript} (2023).

\bibitem{CBMS} B.~Sturmfels. Solving Systems of Polynomial Equations,
{\em CBMS Regional Conference Series in Mathematics}, vol 97, American Mathematical Society, Providence, RI, (2002).

\bibitem{Sul} S.~Sullivant. Algebraic Statistics.
{\em Graduate Studies in Mathematics}, vol 194,
American Mathematical Society, Providence, RI, (2018).

\bibitem{Vakil} R. Vakil. Murphy’s law in algebraic geometry: Badly-behaved deformation spaces. Invent. math. 164, 569–590 (2006).

\bibitem{correlatedeqpayoff} Y. Viossat, E. Solan, and E. Lehrer.  Equilibrium payoffs of finite games, {\em Journal of Mathematical Economics}, vol. 47, n°1 (2011), p. 48-53, 

\end{thebibliography}

\noindent Irem Portakal, Technical University of Munich
\hfill {\tt mail@irem-portakal.de}

\noindent Javier Sendra--Arranz, MPI-MiS Leipzig
\hfill {\tt javier.sendra@mis.mpg.de}
\end{document}